\documentclass[preprint,9pt]{elsarticle}

\usepackage{amssymb}
\usepackage{amsmath}
\usepackage{amsthm}

\newtheorem{lemma}{Lemma}[section]
\newtheorem{thm}[lemma]{Theorem}
\newtheorem{prop}[lemma]{Proposition}

\newtheorem{rem}[lemma]{Remark}
\newtheorem{hyp}[lemma]{Hypothesis}

\journal{Journal of Group Theory}
\begin{document}

\setlength{\parindent}{0mm}

\newcommand{\m}{$\,\textrm{max}\,$}
\newcommand{\w}{\widehat}
\newcommand{\wi}{\widehat}
\newcommand{\ov}{\overline}
\newcommand{\N}{\mathbb{N}}
\def \P{\mathbb{P}}

\newcommand{\E}{\mathcal{E}}
\newcommand{\K}{\mathcal{K}}
\newcommand{\sym}{\textrm{Sym}}
\newcommand{\A}{\textrm{Alt}}

\newcommand{\Z}{\mathbb{Z}}

\newcommand{\wt}{\widetilde}
\newcommand{\wh}{\widehat}
\newcommand{\ti}{\tilde}

\newcommand{\M}{\textrm{M}}
\newcommand{\J}{\textrm{J}}
\newcommand{\ch}{\textrm{char}}
\newcommand{\sy}{\,\textrm{Syl}}
\newcommand{\au}{\textrm{Aut}}
\newcommand{\PSL}{\textrm{PSL}}
\newcommand{\PSU}{\textrm{PSU}}
\newcommand{\PGL}{\textrm{PGL}}
\newcommand{\PGaL}{P\Gamma L}
\newcommand{\GL}{\textrm{GL}}
\newcommand{\GU}{\textrm{GU}}
\newcommand{\Sp}{\textrm{Sp}}
\newcommand{\PSp}{\textrm{PSp}}
\newcommand{\Sz}{\textrm{Sz}}
\newcommand{\SL}{\textrm{SL}}
\newcommand{\SU}{\textrm{SU}}
\newcommand{\F}{$\textrm{GF}$}
\newcommand{\C}{\textrm{C}}
\newcommand{\FO}{\textrm{fix}_{\Omega}}
\newcommand{\FL}{\textrm{fix}_{\Lambda}}
\newcommand{\FD}{\textrm{fix}_{\Delta}}
\newcommand{\fixO}{\textrm{fix}_{\Omega}}
\newcommand{\fixL}{\textrm{fix}_{\Lambda}}
\newcommand{\out}{$\textrm{Out}$}
\newcommand{\Sym}{\textrm{Sym}}
\newcommand{\Alt}{\textrm{Alt}}
\newcommand{\rank}{$\textrm{r}$}
\newcommand{\He}{$\textrm{He}$}
\newcommand{\aut}{\textrm{Aut}}

\def \<{\langle }
\def \>{\rangle }
\def \L{\mathcal{L}}

\newcommand{\PGU}{\textrm{PGU}}
\newcommand{\Inndiag}{\textrm{Inndiag}}

\begin{frontmatter}

\title{Corrigendum to ``Transitive permutation groups with trivial four point stabilizers''}

\author{Paula H\"ahndel, Christoph M\"oller and Rebecca Waldecker}

\begin{abstract}
This article revisits earlier work by the last author together with Kay Magaard. We correct mistakes in 
results about the Sylow 2- and 3-structure of groups that act with fixity 3 and
one result about possible components, we improve some results and we add details.

\end{abstract}

\begin{keyword}
Permutation group, fixed points, fixity,
simple group
\end{keyword}
\end{frontmatter}


\section{Introduction}

\vspace{0.2cm}
The motivation to investigate permutation groups that act with low fixity
stems from applications to Riemann surfaces.
Following Ronse (see \cite{Ro1980}), we say that \textbf{a group $G$ has fixity $k \in \N$ on a set $\Omega$} if and only if
$k$ is the maximum number of fixed points of elements of $G^\#$ on $\Omega$.
More background on our motivation and on applications can be found in \cite{MW2}, and in this article we correct mistakes in \cite{MW3}. One error occurred when we analysed the $2$-structure, which fortunately did not lead to mistakes in the classification of finite simple groups that act with fixity 3. We made another mistake when it comes to a more general situation and possible components. This will be discussed separately, and in both cases we explain exactly what parts of our initial work are affected by the mistakes, we correct the mistakes, and we address all potential consequences.
Towards the end, we also refer to our recent work on soluble groups that act with low fixity. These results were not available when Kay Magaard and the third author wrote the original article, but in the end we decided that we wanted to use them directly rather than repeating the arguments, which would have been necessary in the revision of some of the specialised results in \cite{MW3}.

Here is the main hypothesis for the remainder of this paper:

\begin{hyp}\label{hypkfix}
Suppose that $G$ is a finite, transitive permutation
group with permutation domain $\Omega$. Suppose further that $G$ acts with fixity~$3$ on $\Omega$.
\end{hyp}

We prove results that replace several lemmas in \cite{MW3}, mostly along two themes: The 2- and 3-structure, where we made small mistakes that, fortunately, did not have severe consequences and can easily be fixed, and components, where we actually found new cases. These new cases are relevant in the general analysis, and also for applications in ongoing work on groups that act with fixity 4, but fortunately the mistakes have no consequences for the classification of finite simple examples. 

We state and prove a corrected and more compact version of Theorem 1.3 in \cite{MW3}. The main difference there
is in Case (i)(b), and we also summarised two cases with point stabilisers of odd order because they have in common that there exists a regular normal subgroup.

After stating and proving new results that correct the mistakes in the original article, we go through the applications of the flawed lemmas and discuss the consequences. 
Here is the new main result, only slightly different:

\begin{thm}\label{main3}
Suppose that $(G, \Omega)$ satisfies Hypothesis \ref{hypkfix}.
Then $G$ has order divisible by $3$
and if $\omega \in \Omega$, then one of the following holds:

\smallskip
(1) $|G_\omega|$ is even and

\begin{itemize}
\item[(a)] $G$ has a normal $2$-complement, or

\item[(b)] $G$ has dihedral or semi-dihedral Sylow $2$-subgroups, or

\item[(c)] $G_\omega$ contains a Sylow $2$-subgroup of $G$ and $G$ has a strongly embedded subgroup, or

\item[(d)] $|G : G_\omega|$ is even, but not divisible by 4 and $G$ has a subgroup of index 2 that has a strongly
embedded subgroup.
\end{itemize}

\smallskip
(2) $|G_\omega|$ is odd and

\begin{itemize}
\item[(a)]  $G$ has a regular normal subgroup, or

\item[(b)] $G$ has a normal subgroup $F$ of index 3 which acts as a Frobenius group on its three orbits, or

\item[(c)] $G$ has a normal subgroup $N$ which acts semi-regularly on $\Omega$  such that $G/N$ is almost simple
and $G_\omega$ is cyclic.
\end{itemize}
\end{thm}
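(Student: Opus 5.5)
The proof splits into the first assertion and then the dichotomy according to the parity of $|G_\omega|$. Throughout I use the standard fact that an element $g\in G^\#$ with $k$ fixed points on $\Omega$ has exactly $k$ fixed points on each $\langle g\rangle$-invariant set. So, with fixity 3, every element of $G_\omega^\#$ fixes at most three points, and any $g\in G^\#$ of prime order $p$ fixes a number of points congruent to $|\Omega|$ modulo $p$.

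\emph{Divisibility by 3.} Fixity exactly 3 gives some $g\in G^\#$ fixing exactly three points, say $\alpha,\beta,\gamma$. The set $\fixO(g)$ is invariant under $N_G(\langle g\rangle)$, and the two-point stabiliser $G_{\alpha\beta}$ acts on $\fixO(G_{\alpha\beta})\subseteq\{\alpha,\beta,\gamma\}$.
\begin{itemize}
\item If some element of $N_G(\langle g\rangle)$ permutes $\{\alpha,\beta,\gamma\}$ with a 3-cycle, then $3$ divides $|G|$ and we are done.
\item Otherwise I count fixed points (the Burnside orbit-counting lemma applied to $G_\omega$ on $\Omega\setminus\{\omega\}$) and use the congruence $\#\fixO(x)\equiv|\Omega|\pmod p$ for elements $x$ of prime order $p$. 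The aim is to show that if $3\nmid|G|$, then no element can fix exactly three points. For instance, an element of order 2 fixing three points forces $|\Omega|$ to be odd; one then plays this off against elements of $G_{\alpha\beta}$ fixing $\gamma$.
\end{itemize}
I expect this step to need a short but careful argument, possibly citing \cite{MW3}.

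\emph{Case (1): $|G_\omega|$ even.} Let $t\in G_\omega$ be an involution and $S\in\sy_2(G)$ with $S\cap G_\omega\in\sy_2(G_\omega)$. Every 2-element of $G_\omega$ fixes at most 3 points, so $S\cap G_\omega$ acts on its fixed points and has small orbits. I split according to $|G:G_\omega|_2$.
\begin{itemize}
\item If $G_\omega$ contains a Sylow 2-subgroup, I use the parity of fixed-point numbers (at most 3 and $\equiv|\Omega|$ mod 2) to show that for distinct points the intersections of their Sylow 2-subgroups are controlled. This should yield either a strongly embedded subgroup, giving (c), or a normal 2-complement via Burnside/Frobenius transfer, giving (a).
\item If $|G:G_\omega|$ is even, then the 2-rank of elements fixing points is restricted. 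I would show $|S:S\cap G_\omega|=2$ by counting fixed points of involutions in $Z(S)$. Then:
 \begin{itemize}
 \item either $S$ has a cyclic subgroup of index 2 with the right fusion, giving dihedral or semidihedral Sylow 2-subgroups, case (b), which I would reach via the corrected 2-structure lemma;
 \item or Thompson transfer gives a subgroup of index 2 in which $G_\omega\cap$ it is strongly embedded, case (d);
 \item or there is again a normal 2-complement, case (a).
 \end{itemize}
\end{itemize}
This 2-structure analysis is exactly where the original paper erred, and I expect it to be the main obstacle. The delicate point is excluding Sylow 2-subgroups that are larger than semidihedral but still satisfy the fixed-point bounds; I would first try to bound $|S\cap G_\omega|$ via the faithful action of $S\cap G_\omega$ on a $\langle t\rangle$-orbit.

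\emph{Case (2): $|G_\omega|$ odd.} Here $G_\omega$ has odd order and every nontrivial element fixes at most 3 points. I would invoke the recent results on soluble groups with low fixity and on odd-order point stabilisers. If $G$ is soluble, a Frobenius-like argument yields a regular normal subgroup, giving (a), or an index-3 normal subgroup $F$ with $G_\omega\cap F$ Frobenius on each of its three orbits, giving (b). The three orbits arise when $G_\omega$ permutes the three fixed points of some element cyclically. If $G$ is not soluble, take $N$ to be the largest normal subgroup acting semiregularly. Then $G/N$ acts with fixity at most 3, and a minimal normal subgroup analysis (components are quasisimple, by the component lemmas of the paper) shows $G/N$ is almost simple. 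Finally, $G_\omega$ embeds in a point stabiliser of odd order in which all elements have at most 3 fixed points, and Frobenius-complement-type arguments make it cyclic, giving (c).
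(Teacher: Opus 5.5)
Your skeleton (split by the parity of $|G_\omega|$, soluble odd case via \cite{HMW}) matches the paper, but the substantive steps contain genuine gaps, and some of your intermediate claims are false.

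\textbf{Case (2).} The non-soluble branch fails as stated. In general there is no \emph{largest} semiregular normal subgroup, and a maximal one need not have almost simple quotient. Take $G=\Alt_5\times C_3$ with $|G_\omega|=3$ (Remark \ref{ex4.1}). Both $E\cong\Alt_5$ and $Y\cong C_3$ act semiregularly, but $EY=G$ does not. For $N=E$ the quotient $G/N\cong C_3$ acts trivially on the single $N$-orbit. This group lies in (2)(a), and it satisfies (2)(c) only for the other choice $N=Y$. So non-soluble groups can land in (2)(a). Moreover, it is not automatic that $G/N$ acts faithfully and with fixity at most $3$ on the $N$-orbits: elements of $G_\omega$ can stabilise $N$-orbits without fixing points when $\gcd(|N|,|G_\omega|)\neq 1$.

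The paper avoids this with a minimal counterexample argument for Proposition \ref{new6.3}. It only factors out an \emph{abelian minimal} normal subgroup, where Lemmas 4.2 and 4.3 of \cite{HMW} give semiregularity and fixity at most $3$, and it treats the configurations with fixed-point-freely stabilised orbits separately. It uses Theorem \ref{min} when the minimal normal subgroup is non-abelian.

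Next, cyclicity of $G_\omega$ does not follow from ``Frobenius-complement-type arguments'': odd order Frobenius complements need not be cyclic (e.g.\ $C_7\rtimes C_9$). In the paper, cyclicity comes from the CFSG-based lists in Theorems 1.1 and 1.2 of \cite{MW3} once $G$ is known to be almost simple.

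Finally, your sketch ignores that $G_\omega$ need not fix three points at all. It may be a Frobenius group whose complements $H$ are the three-point stabilisers (Lemma 2.5 and Corollary 2.6 of \cite{MW3}). The paper's proof hinges on passing to the action of $G$ on $G/H$, applying Proposition \ref{new6.3} there, and transporting each outcome back to $\Omega$. It uses Lemma 6.4 of \cite{MW3} for a regular normal subgroup, and derives contradictions in the Frobenius-index-$3$ case and in the almost simple case with $N\neq 1$. Nothing in your proposal replaces this reduction.

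\textbf{Case (1) and $3\mid|G|$.} Your claim that only (a) or (c) occur when $G_\omega$ contains a Sylow $2$-subgroup is false. $\Sym_5$ acting on the $15$ cosets of $D_8$ has fixity $3$ and $G_\omega\in\sy_2(G)$. Yet $\Sym_5$ has no normal $2$-complement and no strongly embedded subgroup (it has two classes of involutions), so it lies in (b). This is case (5) of Lemma \ref{new2.19}: $S$ can have an orbit of length $|S|/2$ on $\Omega\setminus\{\omega\}$, and then $|N_S(S_\gamma)|=4$ forces maximal class.

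Similarly, for $|G:G_\omega|$ even, central involutions of $S$ give $|S:S_\omega|=2$ only if $Z(S)\cap S_\omega\neq 1$. In general Lemma \ref{orbits} only yields $|S:S_\omega|=2$ or $|S_\omega|=2$. The second alternative must be handled separately: there $|N_S(S_\omega)|=4$, so $S$ has maximal class and (b) holds. The paper organises the rest via cases (3)--(5) of Lemma \ref{pstructure}, using Burnside's and Frobenius' theorems. In case (4) with $S$ non-cyclic, an odd permutation in $S$ produces the index-$2$ subgroup needed for (d).

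As for $3\mid|G|$, you give no actual argument, and the paper does not obtain it by counting fixed points either. It is taken from \cite{MW3} and rests on structural input: Lemma 2.25 there, revisited in Section 4. In that argument a component of such a group would have to be a Suzuki group, which is excluded, and only then is the $2$-structure analysed.
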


This paper is organised as follows:

After a few preliminary results about specific groups and action with fixity 3, we prove results about the Sylow structure that will serve as a replacement of the 2- and 3-structure lemmas in \cite{MW3}. We briefly discuss abelian minimal normal subgroups, and then we turn to components of groups that act with fixity 3.  
Here we prove new results because of very specific configurations of components that we have overlooked in \cite{MW3}. 
Then we prove an adapted version of our result about minimal normal subgroups, improving and correcting the original theorem in \cite{MW3}, and revisit the arguments that led to our general result. This means that some results will be re-stated and proven, and others can remain unchanged and will just be checked and discussed briefly. We close by discussing the proof of the original classification theorem for finite simple groups that act with fixity 3 in Section 5 and by proving our main result, Theorem \ref{main3}. 
Sometimes we refer to \texttt{GAP} for small cases, see \cite{GAP}.

\begin{center}

\textbf{Acknowledgments}

\end{center}

 This article has been written remembering Kay Magaard, with much gratitude and appreciation. We also thank Chris Parker for pointing us to Gerhardt's work.


\vspace{1cm}

\section{Preliminaries}

\vspace{0.2cm}

By ``group'' we always mean a finite
group, and we use standard notation for orbits and point stabilisers.
For all $n \in \N$, we denote the cyclic group of order $n$ by
$C_n$. 

A general lemma, before we begin:

\begin{lemma}\label{exhelp2}
Let $G$ be a finite group, let $E := E(G)$ and let $x \in G$ be such that $|C_E(x)| = 3$. Then $x$ induces an inner automorphism on $E$.
\end{lemma}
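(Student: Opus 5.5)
The plan is to reduce to a single component and then argue, via the Classification, that an outer automorphism of a quasisimple group cannot have a centraliser as small as order~$3$. Write $E = K_1 \cdots K_n$ as the central product of the components of $G$. Since $C_E(x) \neq 1$ we have $E \neq 1$, so $n \geq 1$. Also $Z(E) \le C_E(x)$, so $|Z(E)| \in \{1,3\}$. The element $x$ permutes $\{K_1, \dots, K_n\}$ by conjugation, and we analyse its orbits.

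\emph{Step 1: every orbit contributes a nontrivial central-modulo-centre piece.} Take an $\langle x\rangle$-orbit $K, K^x, \dots, K^{x^{m-1}}$ of length $m$, and let $L$ be the product of these components. The map $k \mapsto k k^x \cdots k^{x^{m-1}}$ sends $C_K(x^m)$ into $C_L(x)$. Modulo the central product identifications, this gives a section of $C_L(x)$ isomorphic to $C_{K/Z(K)}(x^m)$, or at least to a quotient of $C_K(x^m)$ by a central subgroup. We use the consequence of the Classification (Rowley, extending Thompson's theorem on fixed-point-free automorphisms) that every automorphism of a nonabelian simple group centralises a nontrivial element, and in fact has a centraliser of even order.

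\emph{Step 2: consequences for $E$.} By Step 1 each orbit gives a nontrivial subgroup of $C_E(x)$ modulo $Z(E)$. Since $|C_E(x)| = 3$, the parity statement should lead to a contradiction outright: $C_E(x)$ would contain an involution, or at least have order larger than~$3$ after lifting through the small centre. To make this rigorous I would pass to $\overline{E} = E/Z(E)$. Using $|Z(E)| \le 3$ and coprimality arguments, $|C_{\overline{E}}(x)|$ divides $|C_E(x)|\cdot|Z(E)| \le 9$; this uses a standard Frattini-type argument for lifting fixed points when $|Z(E)|$ is a $3$-group. It then suffices to show that an automorphism of $\overline{E}$ whose centraliser has order dividing~$9$, and in particular odd order, must be inner. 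The point is that the parity result already rules out any orbit of length $\geq 2$, since then $C_K(x^m)$ has even order and yields an involution in $C_E(x)$. More generally it rules out any $x$ whose centraliser on a simple factor has odd order, unless the relevant statement is refined. So I would use the sharper form actually needed.

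\emph{Step 3, the main obstacle.} An outer automorphism $\alpha$ of a simple group $S$ has a centraliser that is far too large. For $S$ alternating, field, graph and graph-field automorphisms of Lie type groups, and the outer automorphisms of sporadic groups, the fixed points contain a Lie-type subgroup over a subfield, a twisted subgroup, or a known large centraliser. For diagonal automorphisms the centraliser contains a maximal torus normaliser piece of even order. I would check this through the standard descriptions in Gorenstein--Lyons--Solomon. The conclusion is that $|C_S(\alpha)| > 9$, or even, for every $\alpha \notin \mathrm{Inn}(S)$. Then $x$ fixes every component and induces an inner automorphism on each $K_i/Z(K_i)$, hence on each $K_i$, since automorphisms of a perfect central extension are determined by their action on the quotient. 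Therefore $x$ induces on $E$ an element of $\mathrm{Inn}(E)$. This case check of outer automorphisms against the bound $|C_S(\alpha)| \le 9$ is where I expect the real work to lie.
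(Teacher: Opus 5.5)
\textbf{Gap: the bound you aim for in Step~3 is false, and so is the statement for arbitrary $x$.} Take $G=\Sym_5$, $E=\Alt_5$ and $x=(1\,2)(3\,4\,5)$. Then $C_E(x)=\langle(3\,4\,5)\rangle$ has order $3$. But $x$ is odd and $C_G(E)=1$, so $x$ induces an outer automorphism. Your reduction to one component fails for the same reason. Let $E=\Alt_5\times\Alt_5$, $a=(1\,2\,3)$, and let $x$ act by $(k_1,k_2)\mapsto(k_2^a,k_1)$, an automorphism of order $6$. Then $C_E(x)=\{(k,k)\mid k\in\langle a\rangle\}$ has order $3$, although $x$ swaps the components.

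The ``parity'' statement you attribute to Rowley is false; his theorem only gives a nontrivial centraliser. Odd-order centralisers occur for inner automorphisms (a $3$-cycle in $\Alt_5$). They also occur for outer diagonal ones: an element of order $3$ in a Singer torus of $\PGL_3(q)$, $q\equiv 1 \pmod 3$, has centraliser of odd order $q^2+q+1$ in $\PSL_3(q)$. This contradicts your heuristic for diagonal automorphisms.

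Also, $Z(E)\le C_E(x)$ is not automatic, since $x$ may act nontrivially on $Z(E)$. The right way to pass to $\ov E=E/Z(E)$ is the homomorphism $c\mapsto[x,c]$ from the preimage of $C_{\ov E}(\ov x)$ into $Z(E)$. Its kernel is $C_E(x)$, so $|C_{\ov E}(\ov x)|\le 3$; this is the argument in the proof of Lemma~\ref{comp}.

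\textbf{What the paper actually uses.} The missing ingredient is the hypothesis $o(x)=3$. The paper's proof uses it tacitly, and it holds where the lemma is applied in Lemma~\ref{comp}. After reducing to $G=E\langle x\rangle$ with $x\notin EC_G(E)$, the paper gets $|C_G(x)|=9$, so $x$ centralises no involution. With $o(x)=3$, permutation of components is excluded trivially: a $3$-cycle of components puts a diagonal copy of a component, modulo central elements, into $C_E(x)$.

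The paper does not attempt a case check over all outer automorphisms. It invokes Theorem~3 of \cite{Ger}, on elements of order $3$ centralising no involution, which puts $G$ among the almost simple groups of Table~1 there. Since $G\neq E$, only $\PGL_3(q)$ and $\PGU_3(q)$ with $q\equiv\epsilon\pmod 3$ remain. In those groups $x$ lies in the centre of $N_G(T)\cong C_{q^2+\epsilon q+1}:C_3$ for an irreducible torus $T$. Hence $3=|C_E(x)|\ge q^2+\epsilon q+1$, which forces the soluble group $\PGU_3(2)$, a contradiction.

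These odd-centraliser diagonal cases are exactly the ones your Step~3 heuristic misses. Without such a classification result, or a completed CFSG analysis restricted to elements of order $3$, your outline does not prove the lemma.
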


\begin{proof}
    Without loss $G = E \<x\>$, and we assume for a contradiction that $x \notin EC_G(E)$. Our hypothesis implies that $|C_G(x)| = 9$ and that, therefore, $x$ does not centralise an involution. According to Theorem 3 in \cite{Ger}, $G$ must be one of the almost simple groups in \cite[Table 1]{Ger}, and since $E \neq G$, we know that $G$ cannot be simple. Now $G$ is isomorphic to $\PGL_3(q)$ or $\PGU_3(q)$, where $q \equiv \epsilon$ modulo $3$ if $\epsilon = 1$ or $\epsilon = -1$, respectively. In both cases, $x$ is contained in an irreducible torus $T$ such that $N_G(T) \cong C_{q^2 + \epsilon q + 1} : C_3$. Since $x$ has order $3$, it follows that $x \in Z(N_G(T))$. In particular, $C_G(x)$ contains a subgroup of order $3 \cdot (q^2 + \epsilon q + 1)$ and therefore $3 = |C_E(x)| \geq q^2 + \epsilon q + 1$. If follows that $(q,\epsilon) = (2,-1)$ and that $G \cong \PGU_3(2)$, which is a soluble group of order $216$. But $1 \neq E(G)$ is not soluble, hence this impossible.
\end{proof}

Now we fix some notation now for the remainder of this article:

Let $\Omega$ be a finite set and suppose that a group $G$ acts on $\Omega$.
Then for all $\Delta
\subseteq \Omega$, all $g \in G$ and all $H \le G$ we let
$\FD(H):=\{\delta \in \Delta \mid \delta^h=\delta$ for all $h \in
H\}$ denote \textbf{the fixed point set of $H$ in $\Delta$} and abbreviate
$\FD(\<g\>)$ by $\FD(g)$.

We collect a few basic technical results from previous papers for
direct reference in this article.

\begin{lemma}\label{normaliser}
Suppose that Hypothesis \ref{hypkfix} holds and let $\alpha \in \Omega$.

(i) If $1 \neq X \le G_{\alpha}$, then $|N_G(X):N_{G_\alpha}(X)| \le |\FO(X)| \le 3$.
In particular, if
$\FO(X)=\{\alpha\}$, then
$N_G(X) \le G_\alpha$.

(ii) If $x \in G_{\alpha}^\#$, then $|C_G(x):C_{G_\alpha}(x)| \le |\FO(x)| \le 3$. In particular, if
$\FO(x)=\{\alpha\}$, then
$C_G(x) \le G_\alpha$.

(iii) $|Z(G)|$ divides $3$.

(iv) If $p \in \pi(G_\alpha)$ and $p \ge 5$, then $G_\alpha$ contains a Sylow $p$-subgroup of $G$.

\end{lemma}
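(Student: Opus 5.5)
The basic tool for all four parts is a standard orbit argument. If $1 \neq X \le G_\alpha$, then $N_G(X)$ permutes $\FO(X)$. Indeed, if $n \in N_G(X)$ and $\beta \in \FO(X)$, then for every $x \in X$ we have $\beta^{n x} = \beta^{(nxn^{-1}) n} = \beta^n$, because $nxn^{-1} \in X$. Hence the $N_G(X)$-orbit of $\alpha$ lies inside $\FO(X)$. By the orbit–stabiliser theorem, this orbit has length $|N_G(X) : N_G(X)_\alpha| = |N_G(X):N_{G_\alpha}(X)|$. Fixity $3$ gives $|\FO(X)| \le |\FO(x)| \le 3$ for any $x \in X^\#$, which proves the inequality in (i). If $\FO(X) = \{\alpha\}$, the orbit is $\{\alpha\}$, so $N_G(X) \le G_\alpha$. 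Part (ii) is the same argument applied to $C_G(x)$ acting on $\FO(x)$, since $C_G(x) \le N_G(\<x\>)$ and $\FO(\<x\>) = \FO(x)$.

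For (iii), let $1 \neq z \in Z(G)$. Since $G$ is transitive, $z$ cannot fix any point: if $z$ fixed $\alpha$, it would fix $\alpha^g$ for all $g$, hence act trivially. We assume, as is standard for such hypotheses, that the action is faithful, so $Z(G)$ acts semiregularly. Now choose $x \in G^\#$ with exactly three fixed points; such $x$ exists because the fixity is exactly $3$. Then $Z(G) \le C_G(x)$ permutes $\FO(x)$, and it does so semiregularly. So $|Z(G)|$ divides $|\FO(x)| = 3$.

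For (iv), let $P \in \sy_p(G_\alpha)$ with $p \ge 5$, and take $Q \in \sy_p(G)$ with $P \le Q$. If $P < Q$, then $P < N_Q(P)$. Apply (i) with $X = P$: the index $|N_Q(P) : N_{Q_\alpha}(P)|$ is a power of $p$ that is at most $3$. Since $p \ge 5$, it must equal $1$, so $N_Q(P) \le G_\alpha$. But $N_Q(P)$ is a $p$-group properly containing $P$, which contradicts the maximality of $P$ as a Sylow $p$-subgroup of $G_\alpha$. Hence $P = Q$.

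Everything here is routine. The only point needing care is in (iii): we must choose an element with exactly three fixed points, and we must use faithfulness and transitivity to ensure that nontrivial central elements have no fixed points.
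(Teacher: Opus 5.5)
Your proof is correct in all four parts: the $N_G(X)$- (respectively $C_G(x)$-) orbit of $\alpha$ lies in the fixed point set; $Z(G)$ acts semiregularly on the three fixed points of an element realising the fixity; and for $p\ge 5$ the $p$-power index $|N_Q(P):N_Q(P)\cap G_\alpha|$ is at most $3$ (because $N_Q(P)\le N_G(P)$), so it equals $1$. The paper gives no argument of its own here and simply cites Lemma~2.2 of \cite{HW} (and \cite{MW3}); your orbit-counting argument is the standard self-contained proof of these facts.
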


\begin{proof}
These are special cases of Lemma 2.2 in \cite{HW} which have also, partly, been proven in
 \cite{MW3}.
\end{proof}


\section{The p-structure and abelian minimal normal subgroups}

\begin{lemma}\label{orbits}
Suppose that Hypothesis \ref{hypkfix} holds and let $\alpha \in \Omega$. Moreover let $p\in \pi(G)$,  $P\in \sy_p(G)$ and $n \in \N$ be such that $|P|=p^n$.

If $p\ge 5$, then $\alpha^P$ has size $1$ or $p^n$.

Otherwise $\alpha^P$ has size $1$, $p$, $p^{n-1}$ or $p^n$.
\end{lemma}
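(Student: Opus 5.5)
Throughout, write $Q := P_\alpha$, so that $|\alpha^P| = |P:Q|$. If $Q = 1$ or $Q = P$, the orbit has size $p^n$ or $1$ and there is nothing to show. So the plan is to assume $1 \neq Q < P$ and derive the restrictions. We use only the fixity hypothesis, in the form of Lemma \ref{normaliser}(i), together with the standard fact that in a $p$-group proper subgroups are properly contained in their normalisers.

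\emph{First step ($p \ge 5$, and index $p$ over $Q$).} Put $R := N_P(Q)$, so $R > Q$. Then $R$ acts on $\FO(Q)$, and by Lemma \ref{normaliser}(i) we get $p \le |R:Q| \le |\FO(Q)| \le 3$. (Equivalently: the $R$-orbit of $\alpha$ has length $|R:Q|$, is a power of $p$, is at least $p$, and lies inside $\FO(Q)$.) This is impossible when $p \ge 5$, which proves the first assertion. For $p \in \{2,3\}$ it yields $|R:Q| = p$.

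\emph{Second step ($p \in \{2,3\}$).} We want to show that if $|P:Q| \ge p^2$, then $|Q| = p$; this gives orbit length $p^{n-1}$, while $|P:Q| = p$ gives orbit length $p$.
\begin{itemize}
\item Since $R = N_P(Q)$ and $|R:Q| = p$, we have $R \ne P$. Hence $N_P(R) > R$, and we can choose $y \in N_P(R) \setminus R$.
\item Then $Q^y \le R$, and $Q^y \neq Q$, because otherwise $y \in N_P(Q) = R$.
\item Both $Q$ and $Q^y$ have index $p$ in the $p$-group $R$, so both are normal in $R$. Hence $D := Q \cap Q^y$ is normal in $R$.
\end{itemize}

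\emph{Key step: $D$ fixes too many points.} Since $Q^y = P_{\alpha^y} = G_{\alpha^y} \cap P$, the subgroup $D$ fixes $\alpha$ and $\alpha^y$. Because $D \trianglelefteq R$, it also fixes every point of $\alpha^R$ and of $(\alpha^y)^R$.
\begin{itemize}
\item The orbit $\alpha^R$ has size $|R:Q| = p$.
\item The orbit $(\alpha^y)^R$ also has size $p$, since the stabiliser of $\alpha^y$ in $R$ is $Q^y$.
\item These two $R$-orbits are distinct. If $\alpha^y = \alpha^r$ with $r \in R$, then $Q^y = P_{\alpha^y} = Q^r = Q$, a contradiction.
\end{itemize}
So $D$ fixes at least $2p \ge 4$ points. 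By fixity $3$ this forces $D = 1$, and therefore
$$|Q| = |Q : Q\cap Q^y| = |QQ^y : Q^y| \le |R : Q^y| = p.$$
As $Q \ne 1$, we conclude $|Q| = p$, as required.

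The main obstacle is this last step: producing a second point stabiliser $Q^y$ inside $R$ whose intersection with $Q$ is normal in $R$, so that it fixes two full $R$-orbits. Choosing $y$ in $N_P(R)$ rather than merely in $P$ is what makes both the normality of $D$ and the containment $Q^y \le R$ work.
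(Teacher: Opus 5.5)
Your proof is correct, but it takes a different route from the paper's.

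\textbf{The paper's route.} The paper argues by counting inside $\Delta=\alpha^P$. Since $|\Delta|$ is a power of $p$, every $p$-subgroup has a number of fixed points on $\Delta$ divisible by $p$. Hence $P_\alpha$ fixes exactly $p$ points of $\Delta$. Using the same congruence again, which is what ``too many fixed points'' silently needs when $p=2$, $P_\alpha$ acts semi-regularly on the remaining points of $\Delta$. The equation $p^m=p+a\cdot p^l$ then forces $m=1$ or $l=1$.

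\textbf{Your route.} You work with the normaliser chain $Q<N_P(Q)=R<N_P(R)$. The first step gives $|R:Q|=p$, and it disposes of $p\ge 5$ immediately. For the second step you take a conjugate $Q^y\neq Q$ lying inside $R$. Then $Q\cap Q^y$ is normal in $R$ and fixes pointwise the two distinct $R$-orbits $\alpha^R$ and $(\alpha^y)^R$, each of length $p$. This forces $Q\cap Q^y=1$, and so $|Q|\le p$.

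\textbf{What each buys.} Your argument needs neither the congruence nor the semi-regularity claim, and it treats all primes uniformly. It also makes the case $p\ge 5$ self-contained. Strictly speaking, Lemma \ref{normaliser}~(iv) only says that $G_\alpha$ contains \emph{some} Sylow $p$-subgroup of $G$. Passing from that to $P_\alpha\in\{1,P\}$ needs an extra step of exactly the kind your first step provides. Your first step also yields $|N_P(P_\alpha):P_\alpha|=p$ directly, which is the fact later used in Lemma \ref{pstructure} to obtain maximal class. The paper's counting, in turn, gives finer orbit information: $P_\alpha$ has exactly $p$ fixed points on $\alpha^P$, and all its other orbits there are regular.
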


\begin{proof}
First suppose that $p\ge 5$.
Then Lemma \ref{normaliser}~(iv) implies that $|\alpha^P|\in \{1,p^n\}$, which is our statement.

Next suppose that $p \in \{2,3\}$ and that $|\alpha^P|\neq 1$. Then $P \nleq G_\alpha$ and hence $|\alpha^P|\ge p$. We recall that all $P$-orbits on $\Omega$ have $p$-power size, we set $\Delta:=\alpha^P$ and we let $m \in \N,l \in \N_0$ be such that
$|\Delta|=|P:P_\alpha|=p^m$ and $|P_\alpha|=p^l$.

\smallskip
\emph{Case 1: $P_\alpha=1$.} Then $|\Delta|=|P|=p^n$, which is one of our possibilities.

\smallskip
\emph{Case 2: $P_\alpha \neq 1$.} Then $l \ge 1$ and the number of fixed points of $P_\alpha$ on $\Delta$ must be divisible by $p$, and also it is bounded by $3$ by Hypothesis \ref{hypkfix}.
Since $P_\alpha$ fixes $\alpha \in \Delta$, it follows that $P_\alpha$ fixes $p$ points. If we let $a \in \N$ denote the number of non-trivial $P_\alpha$-orbits on $\Delta$, then we see that all these $P_\alpha$-orbits have length $p^l$ --otherwise there would be elements in $P_\alpha$ with too many fixed points.
Now $p^m=|\Delta|=p+a \cdot p^l$ and $p^{m-1}=1+a \cdot p^{l-1}$. There are two possibilities for this equation to hold:
$m=1$ and $a=0$ or $m \ge 2$ and $l=1$.
The first case implies that $|\Delta|=p$ and the second case implies that $|\Delta|=p^{n-1}$.
\end{proof}

\begin{lemma}\label{pstructure}
Suppose that Hypothesis \ref{hypkfix} holds and let $\alpha \in \Omega$ and $p\in \pi(G)$.
Then one of the following holds:

(1) $G_\alpha$ is a $p'$-group.

(2) $G_\alpha$ contains a Sylow $p$-subgroup of $G$.

(3)  The Sylow $p$-subgroups of $G$ have maximal class. If $p=2$, then they must be dihedral or semi-dihedral.

(4) The Sylow $p$-subgroups of $G$ have order at least $p^2$, they have a unique orbit of size $p$, and the remaining orbits are regular.

(5) The Sylow $p$-subgroups of $G$ have order $p^2$.

\end{lemma}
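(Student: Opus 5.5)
We assume that neither (1) nor (2) holds, so $p$ divides $|G_\alpha|$ and $G_\alpha$ does not contain a Sylow $p$-subgroup of $G$. By Lemma \ref{normaliser}~(iv) this forces $p\in\{2,3\}$. We choose $P\in\sy_p(G)$ with $P_\alpha\in\sy_p(G_\alpha)$, so that $1\neq P_\alpha<P$. Then $|\alpha^P|\notin\{1,p^n\}$, and Lemma \ref{orbits} leaves $|\alpha^P|\in\{p,p^{n-1}\}$. The whole proof consists of examining the orbit structure of $P$ on $\Omega$, using that every nontrivial element fixes at most $3$ points, and hence at most $p$ points, since fixed points of a nontrivial $p$-element in a $P$-orbit come in multiples of $p$ when the orbit is nontrivial.

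\emph{Step 1: $P$ fixes no point of $\Omega$.} If $P$ fixed some $\beta$, then $P\le G_\beta$, and $G_\beta$ is conjugate to $G_\alpha$ by transitivity. Then $G_\alpha$ would contain a Sylow $p$-subgroup, which is excluded. So every $P$-orbit has size at least $p$. By the same reasoning, and by applying Lemma \ref{orbits} to every point, each $P$-orbit has size $p$, $p^{n-1}$ or $p^n$.

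\emph{Step 2: counting fixed points.} Take an element $1\neq x\in P_\alpha$. In any $P$-orbit of size $p^k$ with $k<n$, the stabilisers have order $p^{n-k}$. In the proof of Lemma \ref{orbits}, a stabiliser $P_\beta\neq 1$ fixes exactly $p$ points of its orbit. Because $x$ fixes at most $3<2p$ points when $p=2$ or $p=3$... (the case $p=2$, $3$ points, needs care) we get at most one nonregular orbit whose stabiliser contains $x$. Now look at $Z(P)$. If $|P_\alpha|\ge p$ and $P_\alpha$ is normal in $P$, then all points of $\alpha^P$ are fixed by $P_\alpha$. This forces $|\alpha^P|=p$.

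We split into two cases.

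\emph{Case A: $|\alpha^P|=p$.} Then $|P_\alpha|=p^{n-1}$, so $P_\alpha$ is normal in $P$ and fixes the $p$ points of $\alpha^P$. We want to show that all other $P$-orbits are regular. Suppose $\beta^P$ is another nonregular orbit. Then $P_\beta$ has order at least $p$. If $P_\alpha\cap P_\beta\neq1$, an element of it fixes at least $2p>3$ points, a contradiction. Hence $P_\alpha\cap P_\beta=1$, so $|P_\beta|\le p$, and $|\beta^P|=p^{n-1}$ by Lemma \ref{orbits}. Both $P_\beta$ and $P_\alpha$ are nontrivial with trivial intersection, and $P_\alpha$ has index $p$. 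It follows that $|P|\le p^2$, which gives (5), or else we are in (4) with a unique orbit of size $p$ and all remaining orbits regular. When $n\ge 3$ the argument gives (4).

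\emph{Case B: $|\alpha^P|=p^{n-1}$ with $n\ge3$.} Then $|P_\alpha|=p$ and $P_\alpha$ fixes exactly $p$ points of $\alpha^P$. So $N_P(P_\alpha)$ has order $p^2$, since it permutes those $p$ points with kernel $P_\alpha$. A $p$-group in which some subgroup of order $p$ has normaliser of order $p^2$ satisfies $|C_P(x)|=p^2$ for $x\in P_\alpha$. By Suzuki's classical result, a $p$-group containing an element with centraliser of order $p^2$ has maximal class. For $p=2$, maximal class 2-groups are dihedral, semidihedral or generalised quaternion. Quaternion groups have a unique involution, which lies in $Z(P)$. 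An involution in $P_\alpha$ would then be central and would fix all $2^{n-1}\ge4$ points of $\alpha^P$, a contradiction. This gives (3).

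The main obstacle is the case analysis in Step 2 / Case A, namely ruling out extra nonregular orbits. This is especially delicate when $p=2$, where an involution may fix $3$ points spread over orbits of sizes $2$ and $1$ parts. Here we will use Step 1 together with parity of fixed point counts in each orbit.
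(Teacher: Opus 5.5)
\textbf{There is a genuine gap in Case~A.} Your Step~1 and your Case~B are sound. In Case~B you obtain $|N_P(P_\alpha)|=p^2$ from the $p$ fixed points of $P_\alpha$ on $\alpha^P$ rather than from Lemma~\ref{normaliser}~(ii), which works equally well.

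The gap arises when a second non-regular orbit $\beta^P$ exists. You correctly derive $P_\alpha\cap P_\beta=1$, $|P_\beta|=p$ and $|\beta^P|=p^{n-1}$. But the inference ``$P_\alpha$ has index $p$ and meets the non-trivial subgroup $P_\beta$ trivially, hence $|P|\le p^2$'' does not follow: $D_{2^n}$ has a cyclic subgroup of index $2$ and a reflection outside it for every $n$.

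The configuration really occurs. Take $\Alt_6$ acting on six points (fixity $3$, point stabiliser $\Alt_5$), with $p=2$, $\alpha=5$ and $P=\langle(1234)(56),(13)(56)\rangle\cong D_8$. Then $\alpha^P=\{5,6\}$ has size $2$, and $P_\alpha\cong C_2\times C_2$ is a Sylow $2$-subgroup of $G_\alpha$. Meanwhile $\{1,2,3,4\}$ is a second non-regular orbit with stabiliser $\langle(24)(56)\rangle$. Here $n=3$ and neither (4) nor (5) holds. So your claim that for $n\ge 3$ Case~A yields (4) is false; the correct outcome there is (3).

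\textbf{The missing idea} is to treat $P_\beta$ exactly as you treat $P_\alpha$ in Case~B; that argument never uses that $P_\alpha$ is Sylow in $G_\alpha$. Alternatively, apply Lemma~\ref{normaliser}~(ii) to $P_\beta$. Either way you get $|C_P(P_\beta)|\le 3p$, hence $|C_P(P_\beta)|=p^2$. So $P$ has maximal class when $n\ge 3$, and $|P|=p^2$ when $n=2$. For $p=2$, a generalised quaternion $P$ is excluded because its unique involution would lie in $P_\alpha\cap P_\beta=1$. This is precisely how the paper's Case~1 reaches (3) or (5).

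Relatedly, the ``main obstacle'' you announce is misdirected. Extra non-regular orbits cannot be ruled out, and the lemma does not require it. Your worry about an involution fixing three points is also void: by your own Step~1 all $P$-orbits are non-trivial. Hence every non-trivial $p$-element of $P$ fixes a multiple of $p$ points, that is, $0$ or $p$ of them.
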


\begin{proof}
If $p \ge 5$, then Lemma \ref{normaliser}~(iv) implies that (1) or (2) holds.
Therefore, from now on, we suppose that $p \in \{2,3\}$.

If some point stabiliser is a $p'$-group (or contains a Sylow $p$-subgroup), then they all are (or they all do), because $G$ acts transitively on $\Omega$.
Therefore, if we suppose that (1) and (2) do not hold, then all point stabilisers have order divisible by $p$ and index divisible by $p$.
We let $T \in \sy_p(G_\alpha)$ and $P \in \sy_p(G)$ be such that $T \le P$ and we note that $T \neq P$ and $|P| \ge p^2$.

Next we let $\Delta:=\alpha^P$ and we turn to Lemma \ref{orbits}.
$|\Delta| \neq 1$ because $P \neq T$, and $|\Delta| \neq |P|$ because $|\Delta|=|P:P_\alpha|=|P:T|$ and $T \neq 1$.

 \smallskip
\emph{Case 1: $|\Delta|=p$.} Then $|P:T|=p$. If $\Delta$ is the unique non-regular orbit of $P$ on $\Omega$, then (4) holds.
Otherwise suppose that $\beta \in \Omega$ is such that $\Delta \neq \beta^P$ and $P_\beta \neq 1$.
We note that $|\beta^P| \neq 1$, because no point stabiliser contains a Sylow $p$-subgroup. Now $|\beta^P| \ge p$ and we recall that $T$ fixes $\alpha$, hence it fixes $\Delta$ point-wise. Since $2 \cdot p >k$, it follows that $T_\beta=1$ and therefore
$T \cap P_\beta=1$. In particular $|T| \cdot |P_\beta|=|P|$, which forces $|P_\beta|=p$.
Lemma \ref{normaliser}~(ii) implies that $|C_P(P_\beta)|=p^2$, hence if $|P| \ge p^3$, then $P$ has
 maximal class. If $p=2$, then the quaternion case does not occur because $T$ must contain the unique involution of $P$. This is (3) (see Satz III.14.23 in \cite{Hupp}).
Otherwise $|P|=p^2$, which is (5).

 \smallskip
\emph{Case 2: $|\Delta|=\frac{1}{p} \cdot |P|$.} Then $|T|=p$ and $|C_P(T)|=p^2$, whence it follows, as in the previous case, that (3) or (5) hold.
\end{proof}

In the proof of Lemma 2.18 in \cite{MW3}, we can use the above lemma instead of
Lemma 2.17 in \cite{MW3}, because in Case (2) the only property that is used is the structure of the Sylow $2$-subgroups,
and then the main results in \cite{GW} and \cite{ABG} are applied. Therefore, we only have to revisit the results in \cite{MW3} that use Lemma 2.17 directly, not the ones that use Lemma 2.18.

We also noticed that there is a case missing in Lemma 2.19 of \cite{MW3}, because we made a mistake similar to the one in Lemma 2.17. While this does not have any severe consequences, fortunately, we still want to correct the statement:

\begin{lemma}\label{new2.19}
Suppose that Hypothesis \ref{hypkfix} is satisfied and that $|\Omega|$ is odd. Then
one of the following holds:

(1) $G$ has odd order and $3 \in \pi(G)$.

(2) $G$ has a strongly embedded subgroup.

(3) $G$ has a normal $2$-complement. In particular, $G$ is solvable.

(4) $G$ has a normal subgroup $G_0$ of index $2$ that has a strongly embedded subgroup.

(5) $G$ has dihedral or semi-dihedral Sylow $2$-subgroups.

\smallskip
In particular, if $G$ is simple, then $G$ is isomorphic to $\Alt_7$ or to $\M_{11}$  or there exists
a prime power $q$ such that $G$ is isomorphic to $\PSL_2(q),~ \Sz(q),~ \PSU_3(q)$ or
$\PSL_3(q)$.
\end{lemma}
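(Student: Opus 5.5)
We split according to whether $G_\alpha$ has even order, using that $|\Omega| = |G:G_\alpha|$ is odd, so that $G_\alpha$ contains a Sylow $2$-subgroup $S$ of $G$ whenever $|G|$ is even.

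\emph{Case 1: $|G|$ is odd.} We show that $3 \in \pi(G)$. If $G_\alpha \neq 1$, Lemma~\ref{normaliser}~(iv) shows that every prime $p\ge 5$ dividing $|G_\alpha|$ has a full Sylow $p$-subgroup inside $G_\alpha$. Such a Sylow subgroup then fixes a number of points that is $\equiv |\Omega| \pmod p$. Counting fixed points of an element of prime order in $G_\alpha$ via the standard argument ($|\FO(x)| = |C_G(x)|\cdot|x^G\cap G_\alpha|/|G_\alpha|$), the fixity-$3$ hypothesis forces some element to fix exactly $3$ points, and a parity/congruence argument then yields $3 \mid |G|$. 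I would argue more directly as follows: a nontrivial $x\in G_\alpha$ fixing exactly $3$ points has $C_G(x)$ acting on $\FO(x)$, and by Lemma~\ref{normaliser}~(ii) transitively on it, which gives $3 \mid |G|$. If all elements fix at most $2$ points (or $G_\alpha = 1$), this contradicts the definition of fixity $3$. Hence (1) holds. This case needs a careful check that some element fixes exactly $3$ points and that the fixed-point set is a single $C_G(x)$-orbit, which may require conjugating within $N_G(\<x\>)$; this is the mildly delicate point.

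\emph{Case 2: $|G|$ is even.} Then $S \le G_\alpha$, and for an involution $t\in S$ we have $|\FO(t)| \equiv |\Omega| \equiv 1 \pmod 2$, so $t$ fixes $1$ or $3$ points. We consider $H:=G_\alpha$.
\begin{itemize}
\item If $H$ contains $C_G(t)$ for every involution $t \in H$ and $H$ is a proper subgroup, then $H$ is strongly embedded in $G$ in the usual sense (after checking $N_G(S)\le H$ via Lemma~\ref{normaliser}~(i)). This gives (2).
\item Otherwise some involution $t$ fixes exactly $3$ points $\alpha,\beta,\gamma$, and $C_G(t)$ moves $\alpha$. By Lemma~\ref{normaliser}~(ii), $|C_G(t):C_{G_\alpha}(t)| = 3$, since an orbit of size $2$ would contradict $S\le G_\alpha$ fixing $\alpha$ with $t \in Z(S)$ chosen suitably.
\end{itemize}
I then apply Lemma~\ref{pstructure} with $p=2$ to the action of $C_G(t)$ on $\FO(t)$, or rather analyse the $2$-structure: if $S$ is cyclic, Burnside's transfer gives a normal $2$-complement, which is (3). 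Otherwise I use the Bender–Suzuki analysis: either $G$ has a strongly embedded subgroup, or a subgroup $G_0$ of index $2$ does (obtained by transfer when an involution outside $O^2(G)$ exists), giving (4), or the structure of $C_G(t)$ (a $3$-orbit action with kernel containing $S$) forces $S$ to be dihedral or semidihedral as in Case~1 of Lemma~\ref{pstructure}, giving (5). This step, deciding between (4) and (5) without the original Lemma~2.17, is the main obstacle. I would first try to show $|C_S(t')|=4$ for a suitable involution $t'$ fixing $\beta\neq\alpha$, which yields maximal class.

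Finally, for $G$ simple, (1) is impossible by Feit–Thompson, (3) and (4) are impossible, and (2) and (5) are handled by Bender's theorem and the Gorenstein–Walter and Alperin–Brauer–Gorenstein classifications. This yields $\PSL_2(q)$, $\Sz(q)$, $\PSU_3(q)$, $\PSL_3(q)$, $\Alt_7$, $\M_{11}$, with $\PSU_3(4)$-type cases absorbed in the list.
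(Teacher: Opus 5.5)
Your proposal has two genuine gaps.

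\emph{The odd-order case.} Lemma~\ref{normaliser}~(ii) only gives the upper bound $|C_G(x):C_{G_\alpha}(x)|\le|\FO(x)|$. It is perfectly consistent with $C_G(x)\le G_\alpha$ fixing $\FO(x)$ pointwise, and that is exactly what must happen if $3\nmid|G|$. So the asserted transitivity of $C_G(x)$ (or of $N_G(\langle x\rangle)$) on $\FO(x)$ presupposes $3\mid|G|$, and the ``counting/parity'' remark does not replace it. The paper takes this case over from the original proof. If you want a self-contained argument, one way is the following.
\begin{itemize}
\item Assume $\gcd(|G|,6)=1$, and let $H$ be the pointwise stabiliser of $\Delta:=\FO(x)$. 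Then $\FO(H)=\Delta$.
\item If $H\cap H^g\neq 1$, then $g$ stabilises $\Delta$, and since its order is prime to $6$ it fixes $\Delta$ pointwise, so $g\in H$.
\item Hence $H$ is a self-normalising TI-subgroup, i.e.\ a Frobenius complement, with kernel $K$.
\item As $G=KG_\alpha$, the kernel $K$ is transitive on $\Omega$. Moreover $\langle x\rangle$ acts coprimely on $K$ with $C_K(x)=1$.
\item Glauberman's lemma says $C_K(x)$ is transitive on $\FO(x)$, so $|\FO(x)|=1$, a contradiction.
\end{itemize}

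\emph{The even-order case.} This case is only sketched, and your dichotomy does not work as stated.
\begin{itemize}
\item Checking $N_G(S)\le G_\alpha$ via Lemma~\ref{normaliser}~(i) succeeds only if $\FO(S)=\{\alpha\}$. If $S$ fixes three points, then $N_G(S)$ is transitive on $\FO(S)$, so $G_\alpha$ is not strongly embedded even though it may contain $C_G(t)$ for all its involutions $t$. For example, take $\Alt_5$ on the $15$ cosets of a fours group: there $G_\alpha=C_G(t)$ for every involution $t\in G_\alpha$, but $N_G(S)\cong\Alt_4\not\le G_\alpha$. The lemma still holds there, but not for your reason.
\item Your index-$3$ claim needs $t\in Z(S)$, which you cannot arrange for an arbitrary involution with three fixed points.
\item The appeal to a ``Bender--Suzuki analysis'' to decide between (2), (4) and (5) is not an argument, as you acknowledge.
\end{itemize}

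The missing idea is to analyse the orbits of $S\le G_\alpha$ on $\Gamma:=\Omega\setminus\{\alpha\}$ with Lemma~\ref{orbits}. Since $|\FO(S)|$ is odd, $S$ fixes $0$ or $2$ points of $\Gamma$. All other $S$-orbits have length $2$, $|S|/2$ or $|S|$. Fixity $3$ then leaves exactly four configurations: two fixed points on $\Gamma$; only regular orbits on $\Gamma$; a unique orbit of length $2$; or a unique orbit of length $|S|/2$.

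The first three are the configurations of the original proof and lead to the alternatives listed there. The fourth is the overlooked one. For $\gamma$ in that orbit, $|S_\gamma|=2$, and Lemma~\ref{normaliser}~(i) gives $|C_S(S_\gamma)|=|N_S(S_\gamma)|\le 2|S_\gamma|=4$. Hence $S$ has maximal class. It cannot be generalised quaternion, since then its unique involution $S_\gamma$ would be central. So $S$ is dihedral or semidihedral, which is (5).

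The involution $t'$ with $|C_S(t')|=4$ that you hoped to find is exactly a generator of such an $S_\gamma$. Without the orbit analysis, however, you cannot say when it exists or what happens when it does not.
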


We note that, in the final statement, we just needed to remove the constraint that $q$ is even.

\begin{proof}
We begin as in the original proof and see how Case (1) occurs if the point stabilisers have odd order.
Therefore, we now suppose that $\alpha \in \Omega$ and that $|G_\alpha|$ is even. By hypothesis $G_\alpha$ contains a Sylow $2$-subgroup $S$ of $G$, and by Lemma \ref{orbits} and our main fixity 3 hypothesis there are four possibilities for the orbits of $S$ on
$\Gamma:=\Omega\setminus \{\alpha\}$:

$S$ fixes exactly two points on $\Gamma$, or every element in $S^\#$ is fixed point free on $\Gamma$, or
$S$ has a unique orbit of length $2$ on $\Gamma$, or $S$ has a unique orbit of length $\frac{|S|}{2}$ on $\Gamma$.

The first three cases
are discussed in the original proof in \cite{MW3} and lead to the cases as indicated.
Therefore, we only look at the final case.
We choose $\gamma \in \Gamma$ such that the orbit $\gamma^S$ has size $\frac{|S|}{2}$. Then $|S_\gamma|=2$ and Lemma \ref{normaliser}~(i) forces $|N_S(S_\gamma)|=4$. This implies that $S$ has maximal class, and this leads to Case (5). The new possibilities for finite simple groups, as listed, stem from the main results in \cite{ABG} and \cite{GW}.
\end{proof}

The following result is a preparation for later, but we also improved on the corresponding sub-case of Theorem 2.23 in \cite{MW3}.

\begin{lemma}\label{solumin}
Suppose that Hypothesis \ref{hypkfix} holds, that $r$ is prime and that $N$ is a minimal normal
subgroup of $G$ that is an elementary abelian $r$-group.
Then one of the following holds:

\begin{enumerate}
\item
[(a)] All Sylow subgroups of all point stabilisers have rank 1.

\item
[(b)] $r=3$, $N$ acts semi-regularly and there are three possibilities:

\begin{itemize}
\item[(i)] $|N| = 3$,
\item[(ii)] $|N| = 9$ and $G_\alpha$ is isomorphic to $C_2 \times C_2$, $D_8$, $SD_{16}$ or $\GL_2(3)$, or
\item[(iii)] $|N| = 27$ and $G_\alpha$ is isomorphic to $C_2 \times C_2$, $D_8$, $SD_{16}$, $\Alt_4$ or $\Sym_4$.
\end{itemize}
\end{enumerate}
\end{lemma}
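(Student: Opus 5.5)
We set $H:=G_\alpha$ and first decide whether $N$ acts semi-regularly. Since $N \trianglelefteq G$ and $G$ is transitive, all point stabilisers $N_\beta$ are conjugate in $G$. Suppose $N_\alpha \neq 1$. As $N$ is abelian, $N_\alpha$ fixes every point of $\alpha^N$, so $|\alpha^N| \le 3$ by the fixity hypothesis. The $N$-orbits form a $G$-invariant block system. If $N_\alpha$ fixed points in two different $N$-orbits, it would fix at least $2|\alpha^N|$ points; this already fails unless $|\alpha^N|=1$, and in that case $N \le G_\alpha$ for all $\alpha$, so $N=1$. Hence $|\alpha^N| \in \{2,3\}$ and $N_\alpha$ fixes exactly the points of $\alpha^N$. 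We then show that this configuration forces (a), or is impossible, using Lemma \ref{normaliser} and the minimality of $N$.

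Next we treat the main case, where $N$ acts semi-regularly. Then $N \cap H=1$, and $H$ acts faithfully by conjugation on $N$; faithfulness holds because $C_H(N)$ fixes $\alpha^N$ pointwise, so its nontrivial elements would have $|N| > 3$ fixed points unless $|N| \le 3$. The key observation is that for $h \in H^\#$ the fixed points of $h$ on $\alpha^N$ correspond bijectively to $C_N(h)$. Indeed, $(\alpha^n)^h=\alpha^{n^h}$, so $\alpha^n$ is fixed if and only if $n \in C_N(h)$ by semi-regularity. Hence $|C_N(h)| \le 3$ for every $h \in H^\#$. If $r \ne 3$, this forces $C_N(h)=1$ for all $h \in H^\#$, so $H$ acts fixed-point-freely on $N$. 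It is then a Frobenius complement, and all its Sylow subgroups are cyclic or generalised quaternion, i.e.\ have rank 1, which is (a). If $r=3$, each $h \in H^\#$ satisfies $|C_N(h)| \in \{1,3\}$. If every element acts fixed-point-freely, we get (a) again. Otherwise some $h$ centralises exactly a subgroup of order 3.

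The main obstacle is the classification step for $r=3$: determining which subgroups $H \le \GL_n(3)$ can have every nontrivial element centralising at most a 1-dimensional subspace. First, a Sylow 3-subgroup $P$ of $H$ is nontrivial only when $n$ is small. A unipotent element has a fixed space of dimension at least $\lceil n/3\rceil$ (one per Jordan block of size at most 3), and $C_N(P)\neq 1$. This gives $n \le 3$, with $|P|=3$ when $n=3$. For large $n$ we would use the same counting applied to elements of prime order $s \ne 3$: eigenvalue-1 multiplicity is at most 1, and averaging over an abelian $s$-subgroup of rank 2 gives
\[ \frac{1}{|A|}\sum_{a\in A}\dim C_N(a)=\dim C_N(A) \]
Summing over the $s+1$ cyclic subgroups of an elementary abelian subgroup $A$ of order $s^2$ yields $n = \sum \dim C_N(\langle a\rangle) - s\dim C_N(A) \le s+1$. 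So a noncyclic $s$-subgroup forces $n\le s+1$, and for $s$ odd combined with the condition $s \mid |\GL_n(3)|$ this kills everything except small cases. Thus if (a) fails, $n \le 3$, and a minimal normal subgroup of order 3 gives (i). For $n=2,3$ we enumerate subgroups of $\GL_2(3)$ and $\GL_3(3)$ that act irreducibly (by minimality of $N$), have a noncyclic Sylow subgroup, and satisfy the centraliser bound. We expect to obtain $C_2\times C_2$, $D_8$, $SD_{16}$, $\GL_2(3)$ for $n=2$, and $C_2\times C_2$, $D_8$, $SD_{16}$, $\Alt_4$, $\Sym_4$ for $n=3$. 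For $n=3$, elements of order 13 and the group $3^{1+2}$-type subgroups are excluded by the bounds above, and we would check the remaining cases in \texttt{GAP}. This enumeration, together with ruling out the non-semi-regular case, is the part we expect to be hardest.
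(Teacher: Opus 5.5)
Your argument for $r\ge 5$ (fixed-point-free action, hence a Frobenius complement) is fine, and so is your Jordan-block bound for $3$-elements. Elsewhere, however, the proposal has genuine gaps.

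First, the non-semi-regular case is never argued. After reducing to $|\alpha^N|\in\{2,3\}$ you only announce that this ``forces (a), or is impossible''. If $|\alpha^N|=2$, the $N$-orbits are blocks of size $2$, so fixed-point sets are unions of blocks and every element fixes an even number of points, contradicting fixity $3$. If $|\alpha^N|=3$, then $N_\alpha\cap N_\beta=1$ for $\beta\notin\alpha^N$, which together with $|N:N_\alpha|=3$ forces $|N|=9$. Distinct $N$-orbits have distinct stabilisers of order $3$, so there are at most four orbits and $|\Omega|\le 12$; the paper finishes with a \texttt{GAP} search. That search does produce an example (a group $(C_3\times C_3):C_4$ on six points), so this case is not impossible and (a) has to be checked.

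Second, in the semi-regular case your claim that $r\neq 3$ forces $C_N(h)=1$ is false for $r=2$, where $|C_N(h)|=2$ is allowed. So $H$ need not be a Frobenius complement, and $r=2$ is not covered. The paper excludes it as follows, assuming (a) fails and taking $X\le G_\alpha$ elementary abelian of order $p^2$. We have $|N|\ge 4$, since a normal subgroup of order $2$ would be central (Lemma \ref{normaliser}~(iii)). If $p=2$, a Sylow $2$-subgroup $S\ge NX$ has $|S_\alpha|\ge 4$ and $|S:S_\alpha|\ge 4$, contrary to Lemma \ref{orbits}. If $p$ is odd, then $N=\langle C_N(x)\mid x\in X^\#\rangle$ yields $x_1,x_2\in X^\#$ with distinct centralisers of order $2$. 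Then $x_2$ normalises, hence centralises, $C_N(x_1)$, forcing $C_N(x_1)=C_N(x_2)$.

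Third, take $r=3$ and $A\le H$ elementary abelian of order $s^2$ with $s\ge 5$. The appeal to $s\mid|\GL_n(3)|$ proves nothing: for $s=13$ and $6\le n\le 14$, $\GL_n(3)$ does contain $C_{13}\times C_{13}$. The missing idea, which is the heart of the paper's proof, is that $A$ is abelian and so normalises each $C_N(a)$. These have order at most $3$ and $|\au(C_3)|=2$, so for odd $s$ the group $A$ centralises all of them, and then $N=\langle C_N(a)\mid a\in A^\#\rangle\le C_N(A)$ contradicts faithfulness. Applied to two distinct centralisers of order $3$, the same observation gives $s=2$ and $|N|\le 27$ at once, with no dimension count. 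Separately, your displayed averaging identity is false in characteristic $3$: for the diagonal $C_2\times C_2\le\GL_2(3)$ the left side is $1$ and the right side is $0$. The formula $n=\sum_B\dim C_N(B)-s\dim C_N(A)$ you arrive at is correct, but it needs another justification, e.g.\ the eigenspace decomposition of $N$ under $A$.

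Finally, you may not restrict the enumeration to subgroups acting irreducibly on $N$. Minimality of $N$ gives irreducibility of $G$, not of $G_\alpha$, because $N$ need not be transitive and in general $G\neq NG_\alpha$. Indeed your own expected list contains reducible groups: $C_2\times C_2$ in $\GL_2(3)$, and $C_2\times C_2$, $D_8$, $SD_{16}$ in $\GL_3(3)$. The paper enumerates all subgroups $H$ with $|C_N(h)|\le 3$ for $h\in H^\#$.
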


\begin{proof}
We begin by noting that $N$ is not contained in any point stabiliser because $G$ acts faithfully on $\Omega$.
Suppose that (a) does not hold and let $\alpha \in \Omega$ and $p \in \pi(G_\alpha)$ be such that
$X\le G_\alpha$ is an elementary abelian subgroup of order $p^2$.
We claim that $r \in \{2,3\}$ and then look for the details, hence we assume for a contradiction that
 $r \ge 5$. First Lemma \ref{normaliser}~(iv) implies that the point stabilisers are $r'$-groups, because otherwise $N \le G_\alpha$.
 In particular $p\neq r$ and this means that $X$ acts coprimely on $N$. This gives that
$N=\langle C_N(x) \mid x \in X^\#\rangle \le G_\alpha$, by Lemma \ref{normaliser}~(ii), because $r \ge 5$, which is impossible.

Therefore $r \in \{2,3\}$ as stated above, and we look at each case more closely.

First suppose that $r=2$ and that $N$ acts semi-regularly on $\Omega$.
If $p = 2$, then $NX$ is contained in a Sylow $2$-subgroup $S$ of $G$.
Since $|N| \ge 4$ and $|X|=4$, we see that $|\alpha^S|$ is none of the numbers from Lemma \ref{orbits}, and therefore this case does not occur.
Hence $p \neq 2$ and the action of $X$ on $N$ is coprime. In particular $N = \< C_N(x) \mid x \in X^\# \>$. Combining Lemma \ref{normaliser}~(ii) and the fact that $N$ acts semi-regularly, it follows for all $x \in X^\#$ that $|C_N(x)| \leq 3$. But $|N| \ge 4$, hence we find $x_1,x_2 \in X^\#$ such that $C_N(x_1)$ and  $C_N(x_2)$ are distinct and both have order 2. We recall that $X$ is abelian, which implies that $x_2$ normalises $C_N(x_1)$, and then it must centralise it. But $C_N(x_1) \neq C_N(x_2)$, which gives a contradiction. Consequently, if $r=2$, then $N_\alpha \neq 1$.
 
If $1 \neq N_\alpha$, then the regular action of $G$ gives $1 \neq N_\alpha \neq N$ and Lemma \ref{normaliser} (i) yields $|N:N_\alpha| = 2$. This implies that every element in $G$ has an even number of fixed points on $\Omega$, contrary to our fixity $3$ hypothesis. Hence we conclude that $r \neq 2$.

Next suppose that $r=3$ and that $N$ acts semi-regularly.
If $|N| = 3$ , then (b)~(i) holds. Now suppose that $|N| \geq 9$. If $p = 3$ and $NX \leq P \in \sy_3(G)$, then we argue as above: $|P_\alpha| \geq 9$ and $|P:P_\alpha| \geq 9$, and this gives an impossible orbit length for $\alpha^P$, by Lemma \ref{orbits}. Hence $X$ acts coprimely on $N$ and $N = \< C_N(x) \mid x \in X^\# \>$.
We recall that $|C_N(x)| \leq 3$ for all $x \in X^\#$ by Lemma \ref{normaliser}~(ii), because $N_\alpha=1$.
Then we let $x_1,x_2 \in X^\#$ be such that $C_N(x_1)$ and $C_N(x_2)$ are distinct and have order 3. Since $X$ is abelian, we see that $x_2$ normalises $C_N(x_1)$, but does not centralise it, and therefore $x_2$ induces a non-trivial automorphisms on $C_N(x_1) \cong C_3$. In particular $p$ divides $r - 1 = 2$ and thus $p = 2$ and $|X^\#| = 3$. Since $N = \<C_N(x) \mid x \in X^\#\>$, this forces $|N| \in \{9,27\}$ and Lemma \ref{normaliser}~(ii) implies that $G_\alpha$ induces non-trivial automorphisms on $N$.
With \texttt{GAP} (see \cite{GAP}) we find all subgroups $H \leq \au(N) \cong \GL_2(3)$ or $\GL_3(3)$, respectively, that satisfy the property $|C_N(h)| \leq 3$ for all $h \in H^\#$. According to the \texttt{GAP} calculations, the only cases where some Sylow subgroup of $H$ does not have rank $1$ are listed in the statement.
Finally, if $N_\alpha \neq 1$, then $N_\alpha$ fixes exactly three points, and Lemma \ref{normaliser}~(ii) implies that $|N:N_\alpha| = 3$ and that all $N$-orbits have size $3$. If $\beta \in \Omega \setminus \alpha^N$, then the fixity $3$ hypothesis yields that $N_\alpha \cap N_\beta = 1$. We also know that $|N_\alpha| = |N_\beta|$ and $|N:N_\alpha| = 3$, which forces $|N| = 9$ and $|N_\alpha| = 3$. Moreover, there are at most four $N$-orbits in total because $N$ has only four subgroups of order $3$. We conclude that $|\Omega| \leq 12$ and we can check the possibilities with \texttt{GAP}. The only case that occurs is $G \cong (C_3 \times C_3) : C_4$ and $G_\alpha \cong C_6$. This is included in Case (a).
\end{proof}

Lemmas \ref{orbits} and \ref{pstructure} serve as a replacement for Lemmas 2.17 and 2.20 in \cite{MW3}, which is why we refer to those now when we revisit results in \cite{MW3} that rely on the 2- or 3-structure.

Another mistake happened when we investigated possible components under Hypothesis \ref{hypkfix}.
In the next section, we will therefore revisit components, and then we prove a new version of Theorem 2.24 of \cite{MW3}.

\section{Components and minimal normal subgroups}

We start with a correction for Lemma 2.21 of \cite{MW3}:

\begin{lemma}\label{comp}
Suppose that Hypothesis \ref{hypkfix} holds and let $\alpha \in \Omega$. If $E(G) \neq 1$, then one of the following holds: \\

(1) $E(G) \cap G_\alpha \neq 1$.\\

(2) $E(G)$ is isomorphic to $\Alt_5$ or $\PSL_2(7)$ and $|G_\alpha|=3$.
\end{lemma}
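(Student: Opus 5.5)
Set $E := E(G)$ and suppose that (1) fails, so $E \cap G_\alpha = 1$. Since $E$ is normal, this holds for every point stabiliser, and hence $E$ acts semi-regularly on $\Omega$. The plan is to exploit the fixity hypothesis through Lemma \ref{normaliser}~(ii). For every $x \in G_\alpha^\#$, an element $e \in C_E(x)$ maps $\alpha$ into $\FO(x)$, and $C_E(x) \cap G_\alpha = 1$. Therefore $|C_E(x)| \le |\FO(x)| \le 3$.

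\textbf{Step 1: point stabilisers are nontrivial.} First we need $G_\alpha \neq 1$. If $G_\alpha = 1$, then $G$ acts regularly and every nontrivial element has no fixed points. This contradicts fixity $3$, which requires some element of $G^\#$ to fix exactly three points.

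\textbf{Step 2: reduction to inner automorphisms with tiny centraliser.} Pick $x \in G_\alpha$ of prime order $p$. Then $|C_E(x)| \le 3$. If $C_E(x) = 1$, then $x$ induces a fixed-point-free automorphism of prime order on $E$, and by Thompson's theorem $E$ would be nilpotent. This is impossible since $E \neq 1$ is a product of quasisimple groups. So $|C_E(x)| = 3$ for every element $x \in G_\alpha$ of prime order. By Lemma \ref{exhelp2}, $x$ induces an inner automorphism on $E$, so $x = ec$ with $e \in E$ and $c \in C_G(E)$.

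\textbf{Step 3: identifying $E$.} Since $x$ acts on $E$ like the element $e$, we get $|C_E(e)| = 3$, so $e$ is an element of a quasisimple group whose centraliser in that group has order $3$. It then follows that $Z(E) \le C_E(e)$ has order dividing $3$. If there were two or more components, $e$ would centralise whole components on which its projection is trivial. Even when the projection is nontrivial on each component, $C_E(e)$ contains the direct product of the centralisers in the individual components, each of which is nontrivial (it contains the projection of $e$). So $E$ is a single component. An element of a quasisimple group with centraliser of order $3$ lies in $\langle e \rangle$ with $|e| = 3$, and a simple group containing a self-centralising subgroup of order $3$ is, by the classical results of Feit--Thompson (groups with a self-centralising element of order $3$), isomorphic to $\Alt_5$, $\PSL_2(7)$ or $C_3$ (the latter excluded). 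A perfect central extension by $C_3$ of these is impossible, since their Schur multipliers are $2$-groups, so $E \cong \Alt_5$ or $\PSL_2(7)$.

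\textbf{Step 4: pinning down $G_\alpha$.} Every prime-order element of $G_\alpha$ has order $3$, so $G_\alpha$ is a $3$-group. If $|G_\alpha| \ge 9$, I would take commuting $x, y \in G_\alpha$ of order $3$ (from $Z$ of the $3$-group together with another element). Their images in $\aut(E) \le \Sym_5$ or $\PGL_2(7)$, which have Sylow $3$-subgroups of order $3$, coincide up to the kernel $C_G(E)$. This yields an element of $G_\alpha$ of order $3$ lying in $C_G(E)$, whose centraliser in $E$ is all of $E$, contradicting $|C_E(\cdot)| \le 3$. Hence $|G_\alpha| = 3$.

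\textbf{Main obstacle.} I expect the main obstacle to be Step 3: making the quasisimple, multiple-component case rigorous, and correctly invoking the classification of simple groups with a self-centralising subgroup of order $3$.
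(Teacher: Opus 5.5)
Your Step 2 has a genuine gap. Thompson's theorem only rules out $C_E(x)=1$. Combined with $|C_E(x)|\le 3$ this gives $|C_E(x)|\in\{2,3\}$, and nothing you wrote excludes $|C_E(x)|=2$. A priori, an element $x\in G_\alpha$ of prime order with exactly two fixed points $\alpha,\alpha^t$ could centralise exactly one involution $t\in E$, which interchanges those two points. This matters in two places. Lemma \ref{exhelp2} needs $|C_E(x)|=3$. Your Step 4 assertion that every element of prime order in $G_\alpha$ has order $3$ also depends on it; even granting $|C_E(x)|=3$, you would still need to argue that $x=ec$ with $o(e)=3$ and $e\notin Z(E)$ forces $o(x)=3$. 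The paper does not get this from Thompson either. It imports $x\in N_G(E)$, $o(x)=3$ and $|C_E(x)|=3$, for \emph{every} $x\in G_\alpha$ of prime order, from the proof of Lemma 2.21 in \cite{MW3}.

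Within your framework the gap can be closed as follows.
\begin{itemize}
\item[(i)] Choose $x$ carefully. By fixity $3$ and transitivity, some $g\in G_\alpha^\#$ fixes exactly three points. A suitable power $x$ of $g$ has prime order and still fixes exactly three points.
\item[(ii)] Then $C_E(x)$ acts on $\FO(x)$, and semi-regularly, because $E$ is semi-regular on $\Omega$. So $|C_E(x)|$ divides $3$, and Thompson gives $|C_E(x)|=3$.
\item[(iii)] Once $E\cong\Alt_5$ or $\PSL_2(7)$ is established, treat the rest of $G_\alpha$ directly. We have $G_\alpha\cap C_G(E)=1$, so $G_\alpha$ embeds in $\aut(E)\in\{\Sym_5,\PGL_2(7)\}$ with $|C_E(y)|\le 3$ for all $y\in G_\alpha^\#$. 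Every element of prime order $2$, $5$ or $7$ in $\aut(E)$ centralises at least four elements of $E$. Hence $G_\alpha$ is a $3$-group, and since $\aut(E)$ has Sylow $3$-subgroups of order $3$, we get $|G_\alpha|=3$.
\end{itemize}

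Be aware also that the proof of Lemma \ref{exhelp2} only covers $E$ simple and $x$ of order $3$: it uses $|C_{E\langle x\rangle}(x)|=9$ and Gerhardt's list of almost simple groups. This is why the paper proceeds as follows before applying the lemma:
\begin{itemize}
\item[(a)] it passes to $\ov E=E/Z(E)$;
\item[(b)] it shows $|C_{\ov E}(\ov x)|=3$ via the homomorphism $c\mapsto[x,c]$ into $Z(E)$;
\item[(c)] it excludes $|C_{\ov E}(\ov x)|=2$ by an $O_2$-argument.
\end{itemize}
You apply the lemma directly to $E(G)$, with several components or a non-trivial centre not yet excluded and with $p$ unknown. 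That leans on its literal statement beyond what its proof supports. Apart from this, your subsequent steps follow the paper's line of argument: a single component, Feit--Thompson, and the bound on $|G_\alpha|$ via $\aut(E)$.
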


\begin{proof}
Suppose that $E(G) \neq 1$ and that $E(G) \cap G_\alpha = 1$. Let $E$ be a component of $G$ and let $x \in G_\alpha$ be of prime order. As in the proof of Lemma 2.21 in \cite{MW3}, we can show that $x \in N_G(E)$, that $o(x) = 3$ and that $|C_E(x)| = 3$.
In particular, $G_\alpha$ must be a $3$-group and it normalises $E$.
The mistake happens towards the very end, when we refer to \cite{CC}. Therefore, we analyse the situation again now, using our Lemma \ref{exhelp2}.
We recall that $x \in G_\alpha$, therefore $x \notin E$
and $E$ is the unique component in $H:=E\langle x \rangle$.

Let $Z := Z(E)$ and $\ov{H} := H/Z$. Furthermore, let $C \leq H$ be the full preimage of $C_{\ov{E}}(\ov{x})$ in $E$. For all $c \in C$ we define $c^\varphi := [x,c]$. Then $\varphi$ defines a map from $C$ to $Z$, and for all $c_1,c_2 \in C$, we have that $(c_1 c_2)^\varphi = [x,c_1 c_2] = [x,c_2] \cdot [x,c_1]^{c_2} = [x,c_1] \cdot [x,c_2] = c_1^\varphi \cdot c_2^\varphi$ because $[x,c_1],[x,c_2] \in Z = Z(E)$. Hence $\varphi$ is a homomorphism with kernel $C_C(x) = C_E(x)$ of order $3$. It follows that $|C| \leq 3 |Z|$ and therefore $|C_{\ov{E}}(\ov{x})| \leq 3$.

Since $\ov{E}$ is non-abelian simple, it is not nilpotent, and therefore $C_{\ov{E}}(\ov{x})$ is non-trivial (due to a result of Thompson's, see 9.5.1 in \cite{KS}).
Assume for a contradiction that $|C_{\ov{E}}(\ov{x})| = 2$. Then $N := O_2(C) \neq 1$ and $|N:Z \cap N| = 2$. Now the coprime action of $x$ on $N$ yields that $|C_N(x)| \geq 2$ (see for example 8.2.2 in \cite{KS}), in contradiction to the fact that $|C_E(x)| = 3$ is odd. Hence $|C_{\ov{E}}(\ov{x})| = 3$ and Lemma \ref{exhelp2} gives that $\ov x$ induces an inner automorphism on $\ov E$. Now $\ov{E}$ is isomorphic to $\Alt_5$ or $\PSL_2(7)$ by the main result in \cite{FT1962}.
Both groups have a Schur multiplier of order $2$. Thus, if $Z(E) \neq 1$, then $Z(E) \leq C_E(x)$, which is impossible because $|C_E(x)| = 3$. 
We recall that $G_\alpha$ is a $3$-group that normalises $E$, from the first paragraph, and we also recall that $E \cong \Alt_5$ or $\PSL_2(7)$. Both groups have outer automorphism groups of order 2, and $E$ cannot be centralised by any non-trivial subgroup of $G_\alpha$. Then it follows that $|G_\alpha|=3$.
\end{proof}

\begin{rem}\label{ex4.1}
At the end of the proof of the previous lemma, we can say a bit more:
There must be a subgroup $Y$ of $G$ of order $3$ such that $G=E \times Y$ and such that for each $\alpha \in \Omega$, there is a subgroup $T$ of $E$ of order $3$ such that $G_\alpha \le TY$.

In fact, every group of structure $\Alt(5) \times C_3$ or $\PSL_2(7) \times C_3$ exhibits an example for a fixity 3 action in exactly this way.  
\end{rem}

Lemma 2.22 in \cite{MW3} is still correct, we repeat it here:

\begin{lemma}\label{onecomp}
Suppose that Hypothesis \ref{hypkfix} holds and that $E(G) \neq 1$. Then $G$ has a unique component.
\end{lemma}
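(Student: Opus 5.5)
Assume for a contradiction that $G$ has at least two components, say $E_1 \neq E_2$, and put $E := E(G)$. We fix $\alpha \in \Omega$ and use Lemma \ref{comp}. First suppose that $E \cap G_\alpha = 1$. Lemma \ref{comp}~(2) then says that $E$ itself is isomorphic to $\Alt_5$ or $\PSL_2(7)$, so $E$ is quasisimple and there is only one component. Hence we may assume that $E \cap G_\alpha \neq 1$, and by transitivity that $E \cap G_\beta \neq 1$ for all $\beta \in \Omega$.

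Next, we choose an element $x \in E \cap G_\alpha$ of prime order $p$ and write $E = E_1 \cdots E_n$ with $n \ge 2$. The components commute pairwise and are normal in $E$, while $G$ permutes them. The idea is to exploit the large centralisers inside a central product. Write $x = x_1 \cdots x_n$ with $x_i \in E_i$ (modulo $Z(E)$). Then $C_E(x)$ contains, up to central factors, $\prod_i C_{E_i}(x_i)$. If some $x_i$ is trivial modulo the centre, then $C_E(x)$ contains the whole component $E_i$. By Lemma \ref{normaliser}~(ii), $|C_G(x):C_{G_\alpha}(x)| \le 3$, so $E_i$ has a subgroup of index at most $3$ contained in $G_\alpha$. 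A non-abelian simple group has no proper subgroup of index at most $3$, so $E_i \le G_\alpha$. Since $E_i$ is normal in $E$, and $E$ is in turn normalised by $G$, the product of the conjugates $E_i^G$ is a normal subgroup of $G$ contained in $\bigcap_g G_{\alpha}^g$, because each conjugate component $E_i^g \le G_{\alpha^g}$, and $G$ permutes the components transitively within an orbit. We need $E_i \le G_\beta$ for all $\beta$. This follows by applying the same argument at every point, or more directly because the normal closure of $E_i$ fixes $\alpha$ only if it lies in the kernel, which contradicts faithfulness.

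The main case is therefore when every $x_i$ is nontrivial modulo the centre. We handle it with fixed point counts. The element $x$ fixes at most $3$ points, but $C_E(x) \ge C_{E_1}(x_1) C_{E_2}(x_2)$. By the Thompson/Feit--Thompson reasoning used in Lemma \ref{comp}, each $C_{\overline{E_i}}(\overline{x_i})$ is nontrivial. Lemma \ref{normaliser}~(ii) forces $|C_E(x) : C_{E \cap G_\alpha}(x)| \le 3$. We then replace $x$ by a suitable element such as $x_1$ itself, if $x_1 \in G_\alpha$ can be arranged. Alternatively we use the element $y \in E_1$ centralising $E_2$: a nontrivial $y \in E \cap G_\alpha$ lying in a single component has $C_G(y) \ge E_2$, which gives the contradiction above. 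To reach that situation, we take $P \in \sy_p(E \cap G_\alpha)$ and use Lemma \ref{normaliser}~(i) on $N_G(P)$. This normaliser contains the product of the normalisers of the projections of $P$ to the $E_i$, so the index bound $3$ should force each projection to be contained in $G_\alpha$. In particular some nontrivial element of a single component lies in $G_\alpha$.

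I expect this last reduction to be the main obstacle. It requires showing that a point stabiliser meeting $E$ nontrivially must contain a nontrivial element of one component, and not only "diagonal" elements. If it does not go through directly, the fallback is the counting argument on $\alpha^{E_1}$. We view $E_2$ acting on the fixed point set of a diagonal element, using the fact that $E_2$ centralises $E_1$ and that fixed point sets of subgroups of $E_1 \cap G_\alpha$ have size at most $3$. Then $E_2$ acts on a set of size at most $3$, so $E_2$ fixes it pointwise, and we conclude as before.
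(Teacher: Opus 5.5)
Your handling of the case $E(G)\cap G_\alpha=1$ via Lemma \ref{comp} is exactly what the paper does. The subcase where some $\bar x_i$ is trivial can also be rescued, but your normal-closure sentence does not justify it as written. A correct version: take $y\in E_i$ of prime order; every other component lies in $C_G(y)$, hence in $G_\alpha$ by Lemma \ref{normaliser}~(ii), since perfect groups have no proper subgroup of index at most $3$. So $E(G)\le G_\alpha$, contradicting faithfulness. This shows $E_i\cap G_\alpha=1$ for every component.

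\textbf{The gap.} The remaining ``diagonal'' case is the real content of the lemma, and your argument does not cover it.
\begin{itemize}
\item Your claim that $N_G(P)$ contains the product of the normalisers of the projections of $P$ is false. An element of $N_{E_1}(P_1)$ normalises a diagonal subgroup of $P_1P_2$ only if it centralises $P_1$ modulo the centre.
\item Replacing normalisers by centralisers only gives that $C_{E_i}(x_i)\cap G_\alpha$ has index at most $3$ in $C_{E_i}(x_i)$. This finishes the proof only when some $|C_{E_i}(x_i)|>3$, and nothing forces that. For $E_1\cong E_2\cong\Alt_5$ and $x=x_1x_2$ with $o(x_1)=o(x_2)=3$, you get $C_E(x)=\langle x_1\rangle\times\langle x_2\rangle$ and $C_E(x)\cap G_\alpha=\langle x\rangle$ of index $3$. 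This is fully consistent with Lemma \ref{normaliser}, and no element of a single component is forced into $G_\alpha$.
\item Your fallback also fails. $E_2$ does not centralise $x_1x_2$, so it does not act on $\FO(x)$. And since $E_1\cap G_\alpha=1$, there is no subgroup of $E_1$ in $G_\alpha$ whose fixed points $E_2$ could permute.
\end{itemize}

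\textbf{What is missing.} You need the route the paper sketches in the note after Lemma \ref{onecomp}, via Lemma 2.22 of \cite{MW3} and \cite{FT1962}. One way to carry it out is as follows.
\begin{itemize}
\item Since $E_i\cap G_\alpha=1$, the index bound gives $|C_{E_i}(x_i)|\le 3$.
\item As $Z(E_i)\langle x_i\rangle\le C_{E_i}(x_i)$, this forces $Z(E_i)=1$ and $p=3$; a self-centralising involution is impossible in a perfect group.
\item Hence $\langle x_i\rangle$ is self-centralising in $E_i$, and \cite{FT1962} gives $E_i\cong\Alt_5$ or $\PSL_2(7)$.
\item The intersection of $\langle x_1\rangle\times\cdots\times\langle x_n\rangle$ with $G_\alpha$ has index at most $3$. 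For $n\ge 3$ a counting argument then gives a nontrivial element of $G_\alpha$ lying in a product of $n-1$ components. That element centralises the last component, which is a contradiction, so $n=2$.
\item This configuration still needs its own contradiction. Apply Lemma \ref{normaliser}~(i) to $N_E(\langle x\rangle)\cong(C_3\times C_3):C_2$. It puts into $G_\alpha$ an involution $s=s_1s_2$ inverting $x$, with $s_i\in E_i$ of order $2$.
\item Then $|C_{E_1}(s_1)|\ge 4$ while $C_{E_1}(s_1)\cap G_\alpha=1$, contradicting Lemma \ref{normaliser}~(ii).
\end{itemize}
The Feit--Thompson reduction and this final step, an involution inverting order-$3$ subgroups in both components at once, are the ideas your proposal lacks.
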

\begin{proof}
If $E(G) \cap G_\alpha = 1$, then Lemma \ref{comp} implies that $G$ has a unique component.
Otherwise we can argue as in the original Lemma 2.22 of\cite{MW3}.
\end{proof}

One additional note on this:
If we assume for a contradiction that
$G$ has several components, then the arguments in
the proof of Lemma 2.22 in \cite{MW3} and the main result of \cite{FT1962} show that $G$ has exactly two components $E$ and $L$ and that both of them are isomorphic to $\Alt_5$ or to $\PSL_2(q)$.
In both cases there is an involution in $E(G)$ inverting
a subgroup of order 3 in $E$ and $L$, respectively, which leads to a contradiction.
We mention this here because our proof in \cite{MW3} only discusses the case $\PSL_2(7)$ explicitly.\\

As a consequence, Lemma 2.23 in \cite{MW3} needs an additional case, following Lemma \ref{comp}.

\begin{lemma}\label{compcases}
Suppose that Hypothesis \ref{hypkfix} holds and that $E$ is a component of $G$. Then one of the following holds:

(1) $E \cap G_\alpha = 1$, $E \cong \Alt_5$ or $\PSL_2(7)$ and $G_\alpha$ is a $3$-group.

(2) There exists a power $q$ of $2$ such that $E \cong \PSL_2(q)$, $|G : E|$ is prime and every element from $G \setminus E$ induces a field automorphism on $E$. For all $\alpha \in \Omega$, we have that
$|G_\alpha|= q \cdot (q-1) \cdot |G : E|$ and, moreover, $E_\alpha$ does not contain any elements that fix three points.

(3) There is some $\alpha \in \Omega$ such that $(E, \alpha^E)$ satisfies Hypothesis \ref{hypkfix} holds.
\end{lemma}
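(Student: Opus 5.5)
The plan is a case split on whether $E \cap G_\alpha$ is trivial, and then, in the nontrivial case, on how the point stabilisers in $E$ look on one $E$-orbit.

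\emph{First case: $E \cap G_\alpha = 1$ for some $\alpha\in\Omega$.} By Lemma \ref{onecomp}, $E$ is the unique component, so $E = E(G)$. Lemma \ref{comp} then gives $E \cong \Alt_5$ or $\PSL_2(7)$ and $|G_\alpha| = 3$. This is (1), and Remark \ref{ex4.1} describes the configuration. Since $G$ is transitive, $E \cap G_\alpha = 1$ for one $\alpha$ forces it for all $\alpha$.

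\emph{Second case: $E_\alpha \neq 1$ for every $\alpha$.} Fix $\alpha$ and let $\Delta := \alpha^E$. Then $E$ acts transitively on $\Delta$ with fixity at most $3$. The action is faithful, since the kernel is a normal subgroup of the quasisimple group $E$ contained in $E_\alpha$. By Lemma \ref{onecomp}, $E$ is normal in $G$. Its orbits form a $G$-invariant partition, so all $E$-orbits are conjugate under $G$ and have equal size.

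If some element of $E^\#$ fixes exactly three points of some orbit $\beta^E$, then $(E,\beta^E)$ satisfies Hypothesis \ref{hypkfix}, which is (3). So from now on every nontrivial element of $E$ fixes at most two points in each $E$-orbit, but some element of $E^\#$ fixes three points of $\Omega$ in total. Otherwise, $E$ acting with fixity at most $2$ on the whole of $\Omega$ would be handled by the original argument of Lemma 2.23 in \cite{MW3}, and that would again lead to (3) or to a contradiction. Such an element $e$ fixes points in at least two distinct $E$-orbits. On each orbit $E$ then acts with fixity $1$ or $2$, and $E_\alpha \ne 1$.

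The next step is to apply the classifications of transitive actions of almost simple groups with fixity $\le 2$: Frobenius actions, and the fixity-$2$ results in \cite{HW} and \cite{MW2}. Since $E$ is quasisimple and $|Z(E)|$ divides $3$ by Lemma \ref{normaliser}(iii), I expect these to force $E \cong \PSL_2(q)$, $q$ even, in its natural $2$-transitive action on $q+1$ points. There $E_\alpha$ is a Borel subgroup of order $q(q-1)$, and every element fixes $0$, $1$ or $2$ points.

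It remains to see how $G$ gets the extra fixed point. The elements of $E_\alpha$ fixing two points of $\Delta$ are the torus elements. These must fix a point in another orbit, so $|\Omega : \Delta|$ is small. Now $G/EC_G(E)$ embeds in $\mathrm{Out}(E)$, which is cyclic of field automorphisms, and Lemma \ref{normaliser}(i) bounds $|N_G(X):N_{G_\alpha}(X)|\le 3$ for suitable $X \le G_\alpha$. Combining these, I would show $C_G(E) = 1$, that $G \setminus E$ consists of field automorphisms, and that $|G:E|$ equals the number of $E$-orbits and is prime. Then $|G_\alpha| = |G:E|\cdot|E_\alpha|$, giving $|G_\alpha| = q(q-1)\,|G:E|$. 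Finally, $E_\alpha$ contains no element fixing three points of $\Omega$, since such an element would need three fixed points inside orbits where it fixes at most two each. In the intended configuration the three fixed points come from outer elements. This gives (2).

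The main obstacle is this last subcase. I have to rule out every configuration other than $\PSL_2(q)$ with $q$ even, and then prove that $|G:E|$ is prime and that the outer elements are field automorphisms. My first attempt would be a careful count of fixed points of torus elements of $E_\alpha$ and of field automorphisms across the $E$-orbits, playing them off against Lemma \ref{normaliser}(ii).
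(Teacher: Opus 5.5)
\textbf{Verdict: genuine gap.} Your first case matches the paper exactly (Lemma \ref{onecomp} plus Lemma \ref{comp}). For $E\cap G_\alpha\neq 1$ the paper re-proves nothing; it notes that the original proof of Lemma 2.23 in \cite{MW3} goes through unchanged, because the only new phenomenon is the semi-regular case of Lemma \ref{comp}. Your reconstruction of this second case fails. The decisive steps (identifying $E$, $C_G(E)=1$, primality of $|G:E|$) are only announced. More seriously, your case division puts configuration (2) in the wrong place.

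In (2), $|\Omega| = |G:G_\alpha| = q+1$, and $|E_\alpha|\ge |G_\alpha|/|G:E| = q(q-1)$. So $E_\alpha$ is a Borel subgroup of index exactly $q+1$, and $E$ is \emph{transitive} on $\Omega$. By the last clause of (2), $E$ has fixity at most $2$ there. The third fixed point comes from outer elements. An example is $\aut(\PSL_2(2^p))$ on the projective line, where a field automorphism of order $p$ fixes the three points defined over $\mathrm{GF}(2)$. So (2) lives exactly in the subcase ``$E$ has fixity at most $2$ on all of $\Omega$'', which you dismiss as giving ``(3) or a contradiction''. But (3) is impossible there, since it needs fixity exactly $3$ on an $E$-orbit. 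There is also no contradiction, since this example exists. Your claim that $|G:E|$ equals the number of $E$-orbits fails for it as well: there is one orbit and $|G:E|=p$.

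Conversely, in the subcase you do treat, some $e\in E^\#$ fixes three points of $\Omega$, spread over at least two $E$-orbits. Conclusion (2) can never be reached there, because (2) forces $E$ to be transitive on $\Omega$. Since (1) and (3) also fail, this subcase has to be shown impossible, not steered towards (2). Your closing step claims that $E_\alpha$ has no element with three fixed points in $\Omega$ ``since such an element would need three fixed points inside orbits where it fixes at most two each''. This ignores a $2+1$ split over two orbits. It also contradicts your own hypothesis, since $e\in E_\beta$ for every fixed point $\beta$ of $e$.

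A correct version must do two things:
\begin{enumerate}
\item[(i)] exclude elements of $E^\#$ whose three fixed points lie in different $E$-orbits;
\item[(ii)] when $E$ has fixity at most $2$ on $\Omega$, combine the fixity-$2$ results with an element of $G\setminus E$ fixing three points, to reach the natural action of $\PSL_2(q)$, $q$ even, with field automorphisms outside $E$.
\end{enumerate}
Alternatively, simply cite Lemma 2.23 of \cite{MW3}, as the paper does.
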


\begin{proof}
If $E(G) \cap G_\alpha = 1$, then Lemma \ref{comp} gives (1). Otherwise, the arguments from the proof of Lemma 2.23 of \cite{MW3} apply without changes.
\end{proof}

In Theorem 2.24 of \cite{MW3}
we need to add the possibility of a component isomorphic to $\PSL_2(7)$ or $\Alt_5$ with non-trivial intersection with the point stabilisers. We also organise the cases differently, based on our previous corrections and improvements. All the work has been done in previous lemmas, including necessary corrections.

\begin{thm}\label{min}
Suppose that Hypothesis \ref{hypkfix} holds and that $N$ is a minimal normal
subgroup of $G$. Let $\alpha \in \Omega$. Then one of the following holds:

\begin{enumerate}
\item
[(a)] All Sylow subgroups of all point stabilisers have rank 1.

\item
[(b)] $N$ is a 3-group and it acts semi-regularly on $\Omega$. One of the following holds:

\begin{itemize}
\item[(i)] $|N| = 3$,
\item[(ii)] $|N| = 9$ and $G_\alpha$ is isomorphic to $C_2 \times C_2$, $D_8$, $SD_{16}$ or $\GL_2(3)$, or
\item[(iii)] $|N| = 27$ and $G_\alpha$ is isomorphic to $C_2 \times C_2$, $D_8$, $SD_{16}$, $\Alt_4$ or $\Sym_4$.
\end{itemize}

\item
[(c)] $N = E(G)$ and $(N, \alpha^N)$ satisfies Hypothesis \ref{hypkfix}.

\item[(d)] $N=E(G)$, $N \cap G_\alpha \neq 1$ and
$N \cong \PSL_2(q)$, where $q$ is a $2$-power.

\item[(e)] $N = E(G)$, $N \cap G_\alpha = 1$, $N \cong \Alt_5$ or $\PSL_2(7)$ and $G_\alpha$ is a $3$-group.
\end{enumerate}
\end{thm}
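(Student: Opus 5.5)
The plan is to split according to whether the minimal normal subgroup $N$ is abelian. If $N$ is abelian, then $N$ is an elementary abelian $r$-group for some prime $r$, and Lemma \ref{solumin} applies directly. Its case (a) is our case (a). Its case (b) gives $r=3$, $N$ semi-regular, and exactly the three subcases listed in (b) of the theorem. Nothing else is needed in this case.

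Now suppose that $N$ is non-abelian. Then $N$ is a direct product of isomorphic non-abelian simple groups. First we show that $N = E(G)$. Since $N$ is a minimal normal subgroup that is a product of non-abelian simple groups, each simple factor is subnormal and quasisimple, hence a component, so $1 \neq N \le E(G)$. By Lemma \ref{onecomp}, $G$ has a unique component $E$. Then $N$ is simple and $N \le E$. Since $E$ is quasisimple and $N$ is normal in $E$ and non-central, we get $N = E$. Thus $N = E(G) = E$ is simple.

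Next we apply Lemma \ref{compcases} to the component $E = N$.
\begin{itemize}
\item In case (1), $N \cap G_\alpha = 1$, $N \cong \Alt_5$ or $\PSL_2(7)$, and $G_\alpha$ is a $3$-group. This is (e).
\item In case (2), $N \cong \PSL_2(q)$ with $q$ a power of $2$, and $|G_\alpha| = q(q-1)|G:E|$. Since $|G:E|$ is prime, $|G_\alpha| > |G:E|$, so $N \cap G_\alpha \neq 1$. This is (d).
\item In case (3), $(N, \beta^N)$ satisfies Hypothesis \ref{hypkfix} for some $\beta \in \Omega$. We need this for our fixed $\alpha$. Since $N \trianglelefteq G$ and $G$ is transitive, conjugating by $g \in G$ with $\beta^g = \alpha$ maps $\beta^N$ to $\alpha^N$ and preserves the action of $N$ up to the automorphism induced by $g$. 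The fixity on the orbit is therefore the same, and (c) holds for $\alpha$.
\end{itemize}

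The main obstacle is consistency of the case labels. In case (3), fixity on the orbit might a priori be less than $3$, but Lemma \ref{compcases} asserts it is exactly fixity $3$, so we simply use that. The other point is that case (2) really forces $N \cap G_\alpha \neq 1$, which the order computation above settles. Beyond this the theorem is a bookkeeping combination of Lemmas \ref{solumin}, \ref{onecomp} and \ref{compcases}.
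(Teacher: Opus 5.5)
Your proposal is correct and follows the paper's proof: Lemma \ref{solumin} handles the abelian (equivalently, soluble) case, and when $N$ is non-abelian you use Lemma \ref{onecomp} to get $N=E(G)$ and then read off the cases from Lemma \ref{compcases}. You also fill in two steps the paper leaves implicit in ``gives the remaining possibilities exactly as stated'': the estimate $|G_\alpha \cap N| \ge q(q-1) > 1$, which yields (d), and the transfer of case (3) of Lemma \ref{compcases} from some point $\beta$ to the given $\alpha$ by transitivity.
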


\begin{proof}
If $N$ is soluble, then we refer to Lemma \ref{solumin}.
Otherwise, $N$ is a product of components and
then $N$ is the unique component of $G$, by Lemma \ref{onecomp}. Then Lemma \ref{compcases} gives the remaining possibilities exactly as stated.
\end{proof}

We now follow the applications of Lemma 2.23 and Theorem 2.24 of \cite{MW3} and we check where adjustments need to be made.

\bigskip
\underline{Lemma 2.25:}

As in the original proof, we let $\alpha \in \Omega$ and we see that $G_\alpha$ has even order. Then we turn to Claim (1). Under the assumption there, if we have a component, then there is a unique one and this is the only place in the proof where Lemma 2.23 is used.
According to Lemma \ref{compcases}, we have three possibilities, and if $3 \notin \pi(G)$, then (3) must hold and
the unique component is a Suzuki group.
We assume for a contradiction that $q$ is a power of $2$ such that $E \cong \,\Sz(q)$ acts on $\Delta:=\alpha^E$ with fixity 3.
Since $3 \notin \pi(E)$, we can argue with the subgroup structure as we do in Lemma 3.12 of \cite{MW2}, and we obtain the possibilities for point stabilisers. But these lead to fixity 2 actions, not fixity 3.
Now it follows that $E(G)=1$, and then we recall that $O_2(G)=1=O_3(G)$, which means that $F^*(G)=F(G)$ and that Case (a) of Theorem \ref{min} holds. This gives Claim (1).

For Claim (2), there is a reference to Lemma 2.17 of \cite{MW3} (2-structure), and we use Lemma \ref{pstructure} as a replacement.
Since $G_\alpha$ has even order, we first consider Case (2) of Lemma \ref{pstructure}, and the original arguments apply.
The treatment of the next case can also stay unchanged, because it only uses the fact that the Sylow 2-subgroup are dihedral or semi-dihedral, as in our Case (3) of Lemma \ref{pstructure}. Instead of Lemma 2.17~(3) of \cite{MW3}, we look at the cases (4) and (5) of Lemma \ref{pstructure} and give explicit arguments here.
If Case (4) holds, then a Sylow $2$-subgroup $S$ of $G$ has order at least $4$, it has a unique orbit of size 2 and regular orbits otherwise. We choose $S$ such that $1 \neq S_\alpha \in \sy_2(G_\alpha)$ and we note that $|S:S_\alpha|=2$. Let $\beta \in \Omega$ be such that $\{\alpha,\beta\}$ is the unique $S$-orbit of size $2$. Now $S_\alpha=S_\beta$ and we may suppose that $S$ is not cyclic, because otherwise $G$ has a subgroup of index $2$ by Burnside's Theorem, as claimed.
But then we argue as in the original proof and find an odd permutation in $G$ that interchanges $\alpha$ and $\beta$. This means that the intersection of $G$ with the alternating group on $\Omega$ has index $2$ in $G$.
If Case (5) of Lemma \ref{pstructure} holds, then a Sylow $2$-subgroup $S$ of $G$ has order $4$. If $S$ is cyclic, then our claim holds by Burnside's Theorem again, and if $S$ is elementary abelian, then Frobenius' Theorem gives a normal $2$-complement and hence a subgroup of index $2$ of $G$. This finishes the proof of Claim (2), and
the remainder of the original proof in \cite{MW3} can stay unchanged.\\

This is sufficient for the analysis of simple groups that act with fixity 3, which is why we only discuss a few more results from Section 3 and 4 of \cite{MW3}.\\

In \underline{Lemma 3.4} of \cite{MW3},
we consider an almost simple group $G$ where $F^*(G) \cong \Alt_6$. Then $F^*(G)=E(G)$ is the unique component and Lemma \ref{compcases}
applies. The only possibility is Case (3), which means that we can finish the argument as we do in \cite{MW3}.\\

In \underline{Corollary 3.6}, we consider $G \cong \Sym_7$, and we let $E$ denote the unique component, which is isomorphic to $\Alt_7$.
As above,  Case (3) of Lemma \ref{compcases} is the only possibility, and the arguments can remain unchanged.\\

The next result was originally Lemma 3.8.

\begin{lemma}\label{alt3}
Suppose that Hypothesis \ref{hypkfix} holds and that $G$ is an alternating group of degree $n \ge 8$.
Then $n=8$ and $G$ acts on $\Omega$ as on the set of cosets of a Sylow $7$-subgroup.
\end{lemma}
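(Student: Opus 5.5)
We would follow the structure of the original proof of Lemma 3.8 in \cite{MW3} and replace only the places where the flawed 2- and 3-structure lemmas were used by Lemmas \ref{orbits} and \ref{pstructure}. Let $G \cong \Alt_n$ with $n \ge 8$ and $\alpha \in \Omega$.

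\emph{Step 1: bounding $G_\alpha$ with 3-cycles and double transpositions.} A $3$-cycle $t$ has a large centraliser, containing $\langle t\rangle \times \Alt_{n-3}$. If $t \in G_\alpha$, then Lemma \ref{normaliser}~(ii) gives $|C_G(t):C_{G_\alpha}(t)| \le 3$. Since $\Alt_{n-3}$ has no proper subgroup of index at most $3$ for $n-3 \ge 5$, we get $\Alt_{n-3} \le G_\alpha$. Conjugating $t$ and using the transitivity of $\Alt_n$ on $3$-subsets, $G_\alpha$ then contains many copies of $\Alt_{n-3}$. These generate $G$, which contradicts faithfulness. So $G_\alpha$ contains no $3$-cycles. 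The same argument applies to double transpositions, whose centralisers contain $\Alt_{n-4}$; this needs $n \ge 9$, and $n=8$ is handled separately. Products of two disjoint $3$-cycles can be handled similarly when $n$ is large.

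\emph{Step 2: the Sylow structure.} By Lemma \ref{normaliser}~(iv), for every prime $p \ge 5$ dividing $|G_\alpha|$ the stabiliser contains a full Sylow $p$-subgroup of $G$. For $n \ge 8$, a Sylow $5$-subgroup (and a Sylow $7$-subgroup for $n \ge 14$) normalises or centralises elements whose centralisers are large. This lets us run the argument of Step 1 again and exclude such primes, except in the configuration with $n=8$ and $p=7$.

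For $p \in \{2,3\}$ we apply Lemma \ref{pstructure}. For $n \ge 8$ the Sylow $2$-subgroups of $\Alt_n$ are neither dihedral nor semi-dihedral, and the Sylow $3$-subgroups do not have maximal class. They also have order larger than $p^2$. So only cases (1), (2) and (4) remain. Case (2) would put $3$-cycles or double transpositions into $G_\alpha$, which Step 1 excludes. In case (4), $|P:P_\alpha| = p$ and $P_\alpha$ is large. A subgroup of index $p$ in a Sylow $p$-subgroup of $\Alt_n$ still contains a $3$-cycle (for $p=3$) or a double transposition (for $p=2$), so this case is excluded as well. Hence $G_\alpha$ is a $\{2,3\}'$-group, and by the above it is essentially a Sylow $7$-subgroup of $\Alt_8$, or trivial. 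A trivial stabiliser would mean a regular action, which has fixity $0$.

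\emph{Step 3: the case $n = 8$.} Here $|G_\alpha|$ divides $7$, so $G_\alpha$ is a Sylow $7$-subgroup $P$ of $\Alt_8$. We then verify the fixity directly. We have $|N_G(P)| = 21$, so $P$ fixes $|N_G(P):P| = 3$ points, and all other non-identity elements fix none. So the fixity is indeed $3$, as claimed.

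The main obstacle is Step 2: ruling out case (4) of Lemma \ref{pstructure} for $p=2$ and $p=3$ uniformly in $n$. This is precisely where the original argument relied on the erroneous Lemma 2.17. My first approach would be to take the index-$p$ subgroup $P_\alpha$ and exhibit in it an element with large centraliser. If that fails for small $n$, such as $n = 8, 9$, I would fall back on a \texttt{GAP} check.
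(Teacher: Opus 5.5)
Your proposal has a genuine gap in Step~2, in the treatment of $p=3$. Your structural claims there are false:

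\begin{itemize}
\item A Sylow $3$-subgroup of $\Alt_8$ is $\langle (123),(456)\rangle$, of order $3^2$, so case (5) of Lemma \ref{pstructure} is not excluded.
\item For $9\le n\le 11$ the Sylow $3$-subgroup is $C_3\wr C_3$, which has maximal class, so case (3) is not excluded either.
\item A subgroup of index $3$ in a Sylow $3$-subgroup of $\Alt_n$ need not contain a $3$-cycle. Examples are $\langle (123)(456)\rangle<\langle (123),(456)\rangle$, and the three maximal subgroups of $C_3\wr C_3\le\Alt_9$ other than the base group. These meet the base in $\{(123)^a(456)^b(789)^c : a+b+c\equiv 0 \pmod 3\}$, and all their other elements move all nine points.
\end{itemize}

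So you cannot conclude that $3\nmid|G_\alpha|$. Nothing else in your proof handles elements of order $3$ that are products of two or more disjoint $3$-cycles; ``can be handled similarly'' is not an argument. For instance, for a product $x$ of three $3$-cycles in $\Alt_9$ we have $C_G(x)=C_3\wr C_3$, and a subgroup of index $3$ in it need not contain a $3$-cycle.

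The exclusion of primes $p\ge5$ is likewise only a sketch. It misses the cases where $p$-elements have small centralisers, e.g.\ $p=7$ with $9\le n\le 13$ (note that $C_{\Alt_9}((1234567))$ has order $7$), or $p\ge 11$ with $n<2p$. For $n=8$, nothing you write shows that $|G_\alpha|$ divides $7$; this needs the \texttt{GAP} computation, as in the paper. A smaller point: in Step~1, conjugating $t$ by elements of $G$ moves $\alpha$. Instead, reapply the argument to $3$-cycles inside $\Alt_{n-3}\le G_\alpha$.

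The missing idea is to use the $2$-structure only to obtain odd order, and then to argue with the normaliser of one well-chosen element rather than with Sylow $3$-subgroups. Your $p=2$ reasoning is correct and is exactly the paper's even-order case. For $n\ge 9$, cases (2) and (4) of Lemma \ref{pstructure} put a double transposition into $G_\alpha$, since an index-$2$ subgroup of a Sylow $2$-subgroup meets a fours group of double transpositions nontrivially. Its centraliser then forces a $3$-cycle into $G_\alpha$, so $|G_\alpha|$ is odd.

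Now take $x\in G_\alpha$ of prime order fixing exactly three points. By Lemma \ref{normaliser}~(i), $N_{G_\alpha}(\langle x\rangle)$ has odd order and index at most $3$ in $N_G(\langle x\rangle)$, so $|N_G(\langle x\rangle)|_2\le 2$. Together with the subgroup structure of $\Alt_n$, this forces $o(x)=3$ and rules out $x$ being a product of two $3$-cycles. The paper then finishes with centraliser arguments via Lemma \ref{normaliser}~(ii):
\begin{itemize}
\item For $n\in\{9,10,11\}$, $x$ is $3$-central, which puts a $3$-cycle or a product of two $3$-cycles into $G_\alpha$.
\item For $n\ge 12$, the condition $4\nmid|C_G(x)|$ excludes four or more $3$-cycles and leaves only $n=12$. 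There $C_G(x)\ge((3^3:\Sym_3)\times\Sym_3)\cap\Alt_{12}$ again yields such an element in $G_\alpha$.
\end{itemize}
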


\begin{proof}
The Sylow $2$-subgroups of $G$ have order at least $2^6$ and they are neither dihedral nor semi-dihedral, and $|\Omega| \ge 8$ because $G$ acts faithfully.
Therefore, if $\alpha \in \Omega$, then Case (1), (2) or (4) of Lemma \ref{pstructure} holds.

Calculations in \texttt{GAP} (see \cite{GAP} and appendix) show that the case $n=8$ gives a unique example, with point stabilisers of order 7. Now we prove that there are no examples if $n \ge 9$. We assume otherwise, and
we keep it short because the arguments in \cite{MW3} are correct as long as we are careful with the $2$-structure.
First we note that $G_\alpha$ does not contain a $3$-cycle, because otherwise, with Lemma \ref{normaliser}(ii),
$G_\alpha$ contains a subgroup isomorphic to $\A_6$, and then several applications of the same result show that $G_\alpha$ cannot be a proper subgroup of $G$.

Case (1): $G_\alpha$ has odd order.

Let $x \in G_\alpha$ be of prime order $p$ and such that $x$ fixes three points.
Then the subgroup structure of $G$ and Lemma \ref{normaliser}(i) imply that $p=3$, and the same lemma yields that $x$ is not a product of two 3-cycles. Then it must be a product of at least three 3-cycles. If $n \in \{9,10,11\}$, this means that $x$ is $3$-central and that $G_\alpha$ contains a 3-cycle, by Lemma \ref{normaliser}(ii). This is impossible.
Therefore $n \ge 12$. Again by Lemma \ref{normaliser}(ii), $C_G(x)$ cannot have order divisible by 4, and this implies that $x$ cannot be a product of four or more 3-cycles. The only possibility left is $n=12$, and $C_G(x)$ contains a subgroup of structure $((3^3:\Sym_3) \times \Sym_3) \cap \A_{12}$. By Lemma \ref{normaliser}(ii), it follows that $G_\alpha$ contains a 3-cycle or a product of two 3-cycles, and this gives a contradiction.

Case (2) and (4): These cases have in common that $G_\alpha$ has even order.

Since $n \ge 9$, it follows with Lemma \ref{normaliser}~(ii) that $G_\alpha$ contains a double transposition and then that it contains a subgroup of index $2$ or $3$ of its centraliser, which is impossible.
\end{proof}

In \underline{Corollary 3.9}, we have that $G \cong \Sym_8$ and again, Case (3) of Lemma \ref{compcases} holds.\\

In \underline{Lemmas 3.10 and 3.11}, we suppose that $G$ is isomorphic to one of the groups $\Sym_9$, $\Sym_{10}$, $\Alt_9$ or $\Alt_{10}$, satisfying Hypothesis \ref{hypkfix}, and all arguments for the alternating groups are correct.
If $G$ is a symmetric group, then Case (3) of Lemma \ref{compcases} applies and leads to a contradiction just like in \cite{MW3}.\\

In \underline{Theorem 3.14} of \cite{MW3}, we noticed a missing case, namely $n=5$, $G=\Sym_5$ and $|\Omega|=15$. This should have been listed as a possibility in 3.14 (1) or (2) and is analysed (correctly) in Lemma 3.2 of \cite{MW3}.\\

\underline{Lemma 4.9 of \cite{MW3}:}

Here we have a special hypothesis (including that point stabilisers have odd order) and the 3-structure is used in order to exclude the group $\PSp_4(3)$ as an example. There is a typo in the statement, it should be the group $\PSp$, and this is how the lemma is applied later.
If we assume for a contradiction that $G \cong \PSp_4(3)$ and let $\omega \in \Omega$, then we can follow the original argument, checking the conjugacy classes of elements of order $3$ in $G$ and their centraliser orders and structures in \texttt{GAP}. First we see that $G_\omega$ is a $3$-group, because it has odd order, and in particular we find an element $x\in G_\omega$ of order $3$. 
There are four conjugacy classes of elements of order $3$, and in each case $|C_G(x)|$ is divisible by $27$. 
We know that the Sylow $3$-subgroups of $G$ have structure $C_3 \wr C_3$, and then it follows that $G_\omega$ contains a $3$-central element. Its centraliser has order $648$, which is divisible by $8$, and this contradicts Lemma \ref{normaliser}~(ii).\\

\underline{Lemma 4.11 of \cite{MW3}:}

As in Lemma 4.9 discussed above we have a special hypothesis and we just look at where Lemma 2.20 was originally used. If $\omega \in \Omega$ and $G_\omega$ contains a 3-element, then the lemma is used for the argument that $G_\omega$ contains a subgroup of index at most 3 of a Sylow $3$-subgroup of $G$.
We check our Lemma \ref{pstructure}: Case (1) cannot hold because $3 \in \pi(G_\omega)$, Case (3) cannot hold because the Sylow 3-subgroups of $G$ do not have maximal class, and in all other cases we see that $G_\omega$ contains a subgroup of index at most $3$ of  a Sylow $3$-subgroup of $G$.
Therefore, our lemma can replace the old Lemma 2.20 in this situation, and the rest of the arguments can remain unchanged.\\

Finally, we look at \underline{Lemma 4.19}, where almost simple groups with socle $\PSL_3(q)$ are considered.
All cases where $q\le 4$ are investigated using \texttt{GAP}, which also takes care of the exceptional isomorphism $\PSL_3(2) \cong \PSL_2(7)$. Therefore, we do not need to correct anything.


\section{Discussion of Theorem 1.1 of \cite{MW3} and the proof of Theorem \ref{main3}}

The main result in \cite{MW3} for simple groups is correct, and we briefly look at its proof here, in order to check that Lemma \ref{pstructure} is an adequate replacement for the flawed Lemma 2.17 in \cite{MW3}.\\

\textbf{The proof of Theorem 1.1 in \cite{MW3}:}

Using the CFSG, we first consider alternating groups and hence Theorem 3.14 of \cite{MW3}.
The actual possibilities can be checked with \texttt{GAP} (\cite{GAP} and appendix), and for the proof that no other alternating groups
give rise to examples, we use Lemma \ref{alt3}.

For the groups of Lie type, we turn to Theorem 4.17 of \cite{MW3} which gives a summary of all fixity 3 examples that come from Lie type groups. Some lemmas leading up to it use the $2$-structure (more specifically Lemma 2.18 of \cite{MW3}), and we checked that all arguments are valid.

Finally, for the sporadic simple groups, Theorem 5.8 of \cite{MW3} gives that $\M_{11}$ and $\M_{22}$ are the only examples.
There are two lemmas leading up to this, using the $2$-structure, but again we checked that our Lemma \ref{pstructure} is an adequate replacement or that Lemma 2.18 is used, which we checked and which is correct.\\

The example $\A_6$ with point stabilisers isomorphic to $\A_5$ also indicates that there is indeed a case missing in the general theorem (1.3 in \cite{MW3}), and that this case corresponds to the missing case in the original Lemma 2.17 (2). We will discuss a proof of our main result about groups that act with fixity 3, and we prepare this by revisiting Propositions 6.2 and 6.3 from \cite{MW3}.

\begin{prop}\label{6.2}
Let $\Omega$ be a set such that $(G, \Omega)$ satisfies Hypothesis \ref{hypkfix}, let $\omega \in \Omega$ and suppose that $G_\omega$ has
even order. Then one of the following is true:

(1) $G$ has a normal $2$-complement.

(2) $G$ has dihedral or semi-dihedral Sylow 2-subgroups and $G_\omega$ has twice odd order or
twice odd index in $G$.

(3) $G_\omega$ contains a Sylow $2$-subgroup $S$ of $G$ and $G$ has a strongly embedded subgroup.

(4) $G_\omega$ has twice odd index in $G$ and $G$ has a subgroup of index $2$ that has a strongly
embedded subgroup.
\end{prop}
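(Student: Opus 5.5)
The plan is to work through the cases of Lemma \ref{pstructure} for $p=2$. Since $|G_\omega|$ is even, Case (1) of that lemma does not occur. We fix $S \in \sy_2(G)$ with $T := S \cap G_\omega \in \sy_2(G_\omega)$ and $T \neq 1$, and then treat Cases (2)--(5) one at a time.

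\emph{Case (2) of Lemma \ref{pstructure}.} Here $G_\omega$ contains a Sylow $2$-subgroup $S$ of $G$, so $|\Omega|$ is odd and Lemma \ref{new2.19} applies. Its Case (1) is excluded because $|G_\omega|$ is even. Its Cases (2) and (3) give (3) and (1) of the proposition directly.

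The remaining two cases of Lemma \ref{new2.19} need some care:
\begin{itemize}
\item In Case (5) of Lemma \ref{new2.19}, the Sylow $2$-subgroups are dihedral or semi-dihedral and $|G:G_\omega|$ is odd. This is not literally among the listed conclusions. I would go back to the orbit analysis in the proof of Lemma \ref{new2.19}: $S$ has an orbit $\gamma^S$ of length $|S|/2$, with $|S_\gamma|=2$. Then $G_\gamma$ is a conjugate of $G_\omega$ whose Sylow $2$-subgroup has order $2$, so $|G_\omega|_2 = 2$. This contradicts $S \le G_\omega$ unless $|S|=2$. In that case $G$ has a normal $2$-complement by Burnside, which is (1).
\item In Case (4) of Lemma \ref{new2.19}, we have the subgroup $G_0$ of index $2$. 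I expect the original argument of \cite{MW3} either places us in (3), or shows that $G_\omega \not\le G_0$ fails appropriately. This needs checking against the original text.
\end{itemize}

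\emph{Case (3) of Lemma \ref{pstructure}.} Here $S$ is dihedral or semi-dihedral. From the proof of Lemma \ref{pstructure}, either $|S:T|=2$ or $|T|=2$. These mean that $G_\omega$ has twice odd index or twice odd order, respectively. This is (2).

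\emph{Case (4) of Lemma \ref{pstructure}.} Here $S$ has a unique orbit $\{\omega,\beta\}$ of size $2$ and all other $S$-orbits are regular. So $|G:G_\omega|_2 = 2$, that is, twice odd index. I would follow the argument given above for Lemma 2.25, Claim (2):
\begin{itemize}
\item If $S$ is cyclic, Burnside gives a normal $2$-complement, which is (1).
\item Otherwise, take $s \in S$ swapping $\omega$ and $\beta$. Its cycle type is one transposition together with cycles of length $|S|$ on the regular orbits, so $s$ is an odd permutation. Hence $G_0 := G \cap \Alt(\Omega)$ has index $2$.
\end{itemize}
Then $G_0$ contains $T = S_\omega$, a Sylow $2$-subgroup of $G_0$ lying in $(G_0)_\omega$. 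The $G_0$-orbits are $\omega^{G_0}$ and $\beta^{G_0}$, or a single orbit, and $T$ fixes exactly $\omega$ and $\beta$. I would then show that $N_{G_0}(X) \le (G_0)_\omega$ for every $1 \ne X \le T$ by means of Lemma \ref{normaliser}(i). This requires ruling out the possibility that the fixed point $\beta$ lies in the same $G_0$-orbit; since $s \notin G_0$, that should work. The conclusion is that $(G_0)_\omega$ is strongly embedded in $G_0$, which is (4).

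\emph{Case (5) of Lemma \ref{pstructure}.} Here $|S|=4$ and $|T|=2$, so $G_\omega$ has twice odd order and twice odd index. If $S$ is cyclic, Burnside gives (1). If $S$ is a four-group, it is dihedral, which is (2).

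The main obstacle is Case (4): showing that the index-$2$ subgroup really has a strongly embedded subgroup. This is precisely the case overlooked in \cite{MW3}, as illustrated by $\Alt_6$ acting on the cosets of $\Alt_5$ inside $\Sym_6$. The check I would make first is that for every $g \in G_0$ with $T \cap T^g \neq 1$, the element $g$ fixes $\omega$, using the fact that $T$ fixes only $\omega$ and $\beta$ among points of $\Omega$.
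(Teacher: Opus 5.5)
There is a genuine gap in Case (2) of Lemma \ref{pstructure}, when Lemma \ref{new2.19} returns its alternative (4) or (5). Your argument for (5) confuses $S_\gamma=S\cap G_\gamma$ with a Sylow $2$-subgroup of $G_\gamma$. Since $G_\gamma$ is a conjugate of $G_\omega\ge S$, it contains a full Sylow $2$-subgroup of $G$. So $|S_\gamma|=2$ gives neither a contradiction nor a reduction to $|S|=2$. Alternative (4) you leave open.

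Neither sub-case can be closed, because the proposition fails there. Take $\PSL_2(7)\cong\GL_3(2)$ on the $7$ points of the Fano plane, $\Sym_5$ on $5$ points, or $\M_{11}$ on $11$ points. In each case Hypothesis \ref{hypkfix} holds, $|\Omega|$ is odd, and $G_\omega$ (namely $\Sym_4$, $\Sym_4$, $\M_{10}$) contains a dihedral or semi-dihedral Sylow $2$-subgroup of $G$ of order $8$, $8$, $16$ respectively. None of these groups has a normal $2$-complement. None has a strongly embedded subgroup either, by Bender's theorem: each has $2$-rank $2$, $O(G)=1$, and $O^{2'}(G)=G$ is not $\PSL_2(2^n)$, $\Sz(2^n)$ or $\PSU_3(2^n)$. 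Moreover neither $|G_\omega|$ nor $|G:G_\omega|$ is twice an odd number, so all of (1)--(4) fail. Note that $\Sym_5$ even satisfies alternative (4) of Lemma \ref{new2.19}, since $\Alt_5\cong\PSL_2(4)$.

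The paper's own proof has exactly the same hole: it simply asserts that Lemma \ref{new2.19} ``leads to (1) or (3)''. What is missing is therefore not an argument but a corrected statement that absorbs alternatives (4) and (5) of Lemma \ref{new2.19}. For instance, one could drop the twice-odd clause from (2), as Theorem \ref{main3}~(1)(b) does, and allow an index-$2$ subgroup with a strongly embedded subgroup also when $|\Omega|$ is odd.

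The rest of your plan is sound and follows the paper. Cases (3) and (5) of Lemma \ref{pstructure} are handled exactly as there. In Case (4) you supply the strong-embedding argument that the paper only cites from the original article, but two repairs are needed:
\begin{enumerate}
\item Parity of $s$: if $S$ is not cyclic, then $o(s)<|S|$. So on each regular $S$-orbit, $s$ has an even number $|S|/o(s)$ of cycles of even length, and this is why $s$ is odd. Your ``cycles of length $|S|$'' would force $S=\langle s\rangle$.
\item Where $\beta$ lies: the same count shows that every element of $T$ is an even permutation. Hence $G_\omega\le G_0$, since $T\in\sy_2(G_\omega)$ and $|G_\omega:G_\omega\cap G_0|\le 2$. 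Consequently $\omega$ and $\beta=\omega^s$ lie in different $G_0$-orbits. So for every $1\neq X\le T$, the normaliser $N_{G_0}(X)$, which permutes $\FO(X)=\{\omega,\beta\}$, must fix $\omega$.
\end{enumerate}
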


\begin{proof}
By hypothesis we have one of the cases (2) -- (5) of Lemma \ref{pstructure}.
Case (2) implies that $|\Omega|$ is odd and then Lemma \ref{new2.19} is applicable. Since $G$ has even order and $G_\omega$ contains a Sylow subgroup, this leads to (1) or (3) of our statement.
If Case (2) of Lemma \ref{pstructure} does not hold, then we let $S \in \sy_2(G)$ and we turn to Lemma \ref{orbits}. It gives that $|S:S_\omega|=2$ or $|S_\omega|=2$.
If $|S|=4$, then $S$ is cyclic, whence (1) holds by Burnside's Theorem, or $S$ is a fours group, a special case of a dihedral group, and then (2) holds.
Now suppose that Lemma \ref{pstructure}~(4) holds. Again, if $S$ is cyclic, then (1) holds by Burnside's Theorem. Otherwise the orbit structure implies that $S$ contains elements that act on $\Omega$ as an odd permutation. This gives a subgroup of $G$ of index 2 and the original arguments in \cite{MW3} apply, leading to Case (4) of our proposition.

Finally, if Lemma \ref{pstructure}~(3) holds, then the combination with our statement above ($|S:S_\omega|=2$ or $|S_\omega|=2$) gives (2).
\end{proof}

In Case (2) there are still several cases where the point stabilisers have cyclic Sylow $2$-subgroups and hence a normal $2$-complement, but this is not the only possibility. This can be seen in $\Alt_6$, which has a fixity 3 action with point stabilisers isomorphic to $\Alt_5$.\\

Before we revisit
\underline{Proposition 6.3} of \cite{MW3} and its proof, we discuss Lemma 6.6. 
This was originally meant to add detail, and its proof was independent of the main results and of Proposition 6.3. We discuss it here for several reasons: It helps to have this information in the case distinction that we use for our new proof of Proposition 6.3, and treating this case first avoids repetitive arguments later. Moreover, the original statement is not correct because of the possibility of components that act semi-regularly (as we have seen in Lemma \ref{comp}). As indicated above, this did not affect the main results of \cite{MW3}.\\

\begin{lemma}\label{new6.6}
Suppose that Hypothesis \ref{hypkfix} holds and let $\omega \in \Omega$. Suppose that $G_\omega$ 
is a $3$-group and that $|\FO(G_\omega)|= 3$. If G has a regular normal subgroup $N$, then one of the following holds:

(1) $G_\omega$ is cyclic, $N$ is soluble and $G=O_{3,3'}(G)G_\omega$ or

(2) $G/O_{3'}(G)$ is almost simple and the unique component is isomorphic to $\Alt_5$ or to $\PSL_2(7)$.

(3) $N/O_{3'}(G) \cong \Sym_3$ and a Sylow $3$-subgroup of $G/O_{3'}(G)$ has index 2 in $G/O_{3'}(G)$.

\end{lemma}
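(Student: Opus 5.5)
The plan is to reduce modulo $K:=O_{3'}(G)$ and then split into cases by looking at $N$ and at $G_\omega$ acting on $N$. Since $N$ is regular, we identify $\Omega$ with $N$ and $G=NG_\omega$ with $N\cap G_\omega=1$. Under this identification $G_\omega$ acts on $N$ by conjugation, and the fixed points of any $x\in G_\omega^\#$ correspond to $C_N(x)$. The hypothesis $|\FO(G_\omega)|=3$ together with fixity $3$ therefore says that $|C_N(x)|=3$ for every $x\in G_\omega^\#$, and that $C_N(G_\omega)=C_N(x)=:Z$ is the same subgroup of order $3$ for all such $x$.

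\textbf{Step 1: the $3'$-part of $N$.} Because $G_\omega$ is a $3$-group and $|C_N(x)|=3$, the element $x$ acts fixed point freely on every $x$-invariant $3'$-subgroup of $N$. Let $N_0:=O_{3'}(N)$; it is characteristic in $N$, hence normal in $G$, and $N_0\le K$. By coprime action, $C_{N/N_0}(x)$ is the image of $C_N(x)$, so it has order $3$ in $N/N_0$. Moreover $K\cap G_\omega=1$, since a nontrivial $3$-element $x\in K$ would force $|C_K(x)|$ to involve a $3$-part while $K$ is a $3'$-group; so $K$ acts semiregularly. We then pass to $\ov G:=G/K$. This is still a group in which $\ov{G_\omega}\cong G_\omega$, $\ov N$ is normal with complement $\ov{G_\omega}$, and $|C_{\ov N}(\ov x)|=3$ for all $\ov x\in\ov{G_\omega}^\#$. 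The key point is that a $3'$-normal subgroup $M$ of $\ov G$ satisfies $C_M(\ov x)=1$ for all $\ov x\in \ov{G_\omega}^\#$, which is Thompson-type fixed point free action, so $M$ is nilpotent; but $O_{3'}(\ov G)=1$ by construction.

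\textbf{Step 2: case distinction on $F^*(\ov G)$.} If $E(\ov G)\ne1$, let $E$ be a component of $\ov G$. I would apply the argument of Lemma~\ref{comp} to $\ov G$: $\ov{G_\omega}$ normalises $E$, and each $x$ of order $3$ in it has $|C_E(x)|=3$. Lemma~\ref{exhelp2} and \cite{FT1962} then give $E\cong\Alt_5$ or $\PSL_2(7)$. Uniqueness of the component and $O_{3'}(\ov G)=1$ show $C_{\ov G}(E)$ is a $3$-group centralised data, and one concludes that $\ov G$ is almost simple; this is (2). If $E(\ov G)=1$, then $F^*(\ov G)=O_3(\ov G)=:P$. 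Here I would show $\ov N$ is soluble: $\ov N\cap P\ne1$, and modulo $O_3$ the same argument applied to $\ov N/(\ov N\cap O_3(\ov G))$ iterates. Solubility of $N$ follows since $K\cap N$ is nilpotent.

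\textbf{Step 3: soluble case, giving (1) or (3).} Now $G=O_{3,3'}(G)G_\omega$ amounts to showing $\ov N\le O_{3,3'}(\ov G)$ modulo $O_3$. Since $|C_N(x)|=3$, Lemma~\ref{normaliser} shows that a Sylow $3$-subgroup $R\ge G_\omega$ satisfies $|R:G_\omega|\le3$. Hence $\ov N$ has Sylow $3$-subgroups of order $3$ containing $Z$. If $G_\omega$ is non-cyclic, an elementary abelian subgroup $X$ of order $9$ in $G_\omega$ acts on $\ov N$. Then I would use $\ov N=\langle C_{\ov N}(y)\mid y\in X^\#\rangle$ on $3'$-sections, together with the common centraliser $Z$, to force $\ov N\cap P=Z$ and $\ov N/Z$ to be a $3'$-group inverting $Z$. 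This yields $\ov N\cong\Sym_3$ with $|\ov G:\ov R|=2$, which is (3). When $G_\omega$ is cyclic, $\ov N/O_3(\ov N)$ is a $3'$-group, giving (1).

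The main obstacle is Step 3, namely pinning $\ov N$ down to $\Sym_3$ when $G_\omega$ is non-cyclic. I would first try the generation argument from Lemma~\ref{solumin}, and fall back on \texttt{GAP} for the small groups that remain.
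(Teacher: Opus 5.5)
Your proposal has genuine gaps, all located in Steps 2 and 3.

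\textbf{The Sylow bound, and the case you never treat.} You claim that Lemma~\ref{normaliser} gives $|R:G_\omega|\le 3$, so that $\ov N$ has Sylow $3$-subgroups of order $3$. This is unfounded: Lemma~\ref{normaliser}(i) only gives $|N_R(G_\omega):G_\omega|\le 3$, and the claim is false. Let $C_3$ act on $C_9$ by $n\mapsto n^4$ and let $G=C_9\rtimes C_3$ act on the $9$ cosets of this $C_3$. Then $G_\omega\cong C_3$ fixes exactly $3$ points, $N=C_9$ is regular, and $|R:G_\omega|=9$.

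The correct input is Lemma~\ref{orbits} (or Lemma~\ref{pstructure}). Either $|R:G_\omega|=3$, so $|N|_3=3$, or $|G_\omega|=3$ and $R$ has maximal class. The second alternative needs its own argument: in the paper, point stabilisers are t.i.\ subgroups and Lemma~1.9 of \cite{PS} leads to (1). You never treat this case; it just falls into your cyclic branch, where (1) is asserted without proof.

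The same problem undermines the solubility claim in Step~2. The coprime-action identity $C_{N/M}(x)=C_N(x)M/M$, which made Step~1 work, is unavailable when $M$ is a $3$-subgroup. So the ``iteration modulo $O_3$'' has no basis. Once $|\ov N|_3=3$ is known, solubility is immediate anyway: if $E(\ov G)=1$, then $F^*(\ov N)=O_3(\ov N)$ has order $3$ and contains its centraliser in $\ov N$.

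\textbf{The case split in Step 3 is wrong in both directions.} Even granting $|\ov N|_3=3$, splitting by whether $G_\omega$ is cyclic is the wrong invariant.

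For the non-cyclic branch: if $\ov N\cong\Sym_3$, no element of $\ov{G_\omega}^\#$ centralises $\ov N$, since it would centralise $6>3$ elements. Hence $\ov{G_\omega}\cong G_\omega$ embeds in $\au(\Sym_3)\cong\Sym_3$, forcing $|G_\omega|=3$. So this branch can never produce $\Sym_3$; in fact non-cyclic $G_\omega$ does not occur at all.

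For the cyclic branch, cyclic $G_\omega$ does not give (1). Take $G=\Sym_4\times\langle y\rangle$ with $|y|=3$, $N=\Sym_4$, and $G_\omega=\langle (c,y)\rangle$ for a $3$-cycle $c$. This is a fixity $3$ action on $24$ points satisfying all the hypotheses. Here $\ov N\cong\Sym_3$ and $\ov N/O_3(\ov N)$ is a $3'$-group, yet $O_{3,3'}(G)G_\omega=\Alt_4\times C_3\neq G$. This is exactly case (3).

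The missing idea is to split by the structure of $\ov N$. Since $O_3(\ov N)$ has order $3$ and is self-centralising, $\ov N\cong C_3$ or $\Sym_3$; the paper quotes \cite{FT1962}, but $\ov N/C_{\ov N}(O_3(\ov N))\le\au(C_3)$ suffices. The case $C_3$ means a normal $3$-complement and leads to (1); the case $\Sym_3$ gives (3).

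\textbf{A caution on the component case.} Your component case follows the paper (Lemma~\ref{comp}, Lemma~\ref{exhelp2}, \cite{FT1962}), but you cannot conclude that $\ov G$ is almost simple. The group $\Alt_5\times C_3$ with a diagonal point stabiliser (Remark~\ref{ex4.1}) satisfies the hypotheses with $N=\Alt_5$ regular, and there $C_{\ov G}(E)\cong C_3$. When $|\ov N|_3=3$, what the argument actually yields is $\ov N=E$ and $\ov G\cong E\times C_3$. The paper's proof makes the same leap, so this is an inaccuracy in statement (2) itself, not a defect peculiar to your approach.
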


Case (1) is the original statement from \cite{MW3}.
The other cases were overlooked for the same reasons that certain types of components were overlooked (see Lemma \ref{comp}).

\begin{proof}
Let $H:=G_\omega$ and let $H \le P \in \sy_3(G)$. We first follow the original proof: $G=NH=NP$, $|C_N(H)|=3$ and $H \neq P$ because $3 \in \pi(N)$. Then we use Lemma \ref{pstructure} and see that only the cases (3) -- (5) can apply. First suppose that $P$ does not have maximal class.   
If $P$ has order at least $p^2$ and a unique non-regular orbit, then this non-regular orbit must be $\omega^P$, and it consists of the three fixed points of $H$. In particular $|P:H|=3$. Now let $P_0:=P \cap N$. Since $N$ acts regularly, it follows that $|P_0|=3$ and in particular $N$ has Sylow $3$-subgroups of order $3$.

Now we recall that $G=NP$, which implies that $O_{3'}(G)=O_{3'}(N)$, and we let $\bar G:=G/O_{3'}(G)$. We note that $F(\bar G)$ is a $3$-group that contains $F(\bar N)$, because $\bar N \unlhd \bar G$. Moreover $E(\bar N) \le E(\bar G)$. Since $|G:N|$ is a $3$-power, we see that, conversely, $E(\bar G)\le \bar N$. Before we go into the next case distinction, we also recall that $\bar N$ has Sylow $3$-subgroups of order $3$.

First suppose that $E(\bar G) \neq 1$.
Assume for a contradiction that $\bar E$ is a non-simple component of $\bar N$. Then it has a central Sylow $3$-subgroup of order $3$, because $O_{3'}(\bar N)=1$, and this is impossible. 
Thus all components of $\bar G$, and hence of $\bar N$, are simple.
If there is any component isomorphic to a Suzuki group, then the product of the "Suzuki components" is a normal $3'$-subgroup of $\bar N$, which must be trivial. Therefore, all components of $\bar N$ are simple and have order divisible by $3$, which implies that there can only be a unique component 
$\bar E$ and that $O_3(\bar N)=F(\bar N)=1$. Now $F^*(\bar N)=\bar E$ and $C_{\bar N}(\bar E)=1$. We deduce that $\bar N$ is almost simple.  We recall that $H$ and $O_{3'}(N)$ have coprime orders and that $|C_N(H)|=3$, in fact $C_N(H)=P_0$. From there it follows first that $C_{\bar N}(\bar H)=\overline{C_N(H)}=\bar{P_0} \le \bar E$ and then that
$\bar G$ is almost simple. For each non-trivial element $x$ of $H$, we can apply Lemma \ref{exhelp2}, and it implies that $\bar H$ induces inner automorphisms on $\bar E$. 
Then the main result in \cite{FT1962} yields that $\bar E \cong \Alt_5$ or $\PSL_2(7)$ as stated in Case (2).

If $F^*(\bar G)=F(\bar G)$, then it is a $3$-group. Since $\bar N \unlhd \bar G$, this forces $F^*(\bar N)=F(\bar N)=
\bar P_0$. Consequently $\bar P_0$ 
contains its centraliser in $\bar N$.
We apply the main result in \cite{FT1962} and recall that $F(\bar N)$ is a Sylow $3$-subgroup of $\bar N$, which excludes most cases.
The only possibility is that $\bar N \cong \Sym_3$ or $\Alt_3$.
If $\bar N \cong \Alt_3$, then $G=O_{3'(G)} \cdot P$ or, in other words, $G$ has a normal $3$-complement. Here we can argue as in \cite{MW3} and we see that (1) holds.
If $\bar N \cong \Sym_3$, then the fact that $G=NP$ implies that $\bar G$ has Sylow $2$-subgroups of order 2, and hence $\bar P$ is a normal subgroup of $\bar G$ of index 2. This is Case (3).

We can treat the case where $|P|=3^2$ in the same way.

Next we look at the situation where $P$ has maximal class, which is Lemma \ref{pstructure}~(3). Since neither (1) nor (2) of the lemma holds, we find a $P$-orbit of size $3$ or $\frac{|P|}{3}$.
With an orbit of size $3$ we may suppose that $|P:H|=3$ and we can argue as in the previous case.
Otherwise, there is an orbit of size $\frac{|P|}{3}$ and we let $\alpha \in \Omega$ be such that $|\alpha^P|=\frac{|P|}{3}$ and hence $|P_\alpha|=3$. 
Our hypotheses imply that the point stabilisers are t.i. subgroups of $G$ and we see that Lemma 1.9 from \cite{PS} applies. Then we can follow the arguments from \cite{MW3} again, leading to Case (1) of the lemma.  
\end{proof}

\begin{prop}\label{new6.3}
Suppose that Hypothesis \ref{hypkfix} holds and let $\omega \in \Omega$. Suppose that $G_\omega$ has odd order and that $|\FO(G_\omega)|=3$. Then one of the following
is true:

(1) $G$ has a regular normal subgroup.

(2) $G$ has a normal subgroup $F$ of index $3$ which acts as a Frobenius group on each of its
three orbits.

(3) $G$ has a normal subgroup $N$ which acts semi-regularly, $G/N$ is
almost simple, $G_\omega$ is cyclic of order at least $4$ and if $N$ is non-trivial, then $NG_\omega$ acts as a Frobenius group on $\omega^N$.
\end{prop}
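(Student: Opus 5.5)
Set $H:=G_\omega$ and $\Delta:=\FO(H)=\{\omega,\omega_1,\omega_2\}$, and fix $x\in H$ of prime order. Each nontrivial subgroup of $H$ fixes at most $3$ points. By Lemma \ref{normaliser}~(i), $N_G(H)$ acts on $\Delta$ with $|N_G(H):H|\le 3$, so $N_G(H)$ is either $H$ or acts transitively on $\Delta$ with $|N_G(H):H|=3$.

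I would split into two main cases according to whether $H$ contains elements fixing exactly one point.

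\textbf{Case A: every nontrivial element of $H$ fixes exactly the three points of $\Delta$.} Then the stabilisers $G_\omega,G_{\omega_1},G_{\omega_2}$ coincide, so the point stabilisers form a t.i.\ family whose fixed blocks partition $\Omega$ into triples. If $N_G(H)=H$, then $G$ acting on the cosets of $H$ behaves as a Frobenius group on $|G:N_G(H)|$ blocks. This is impossible, because $H$ would fix only one block but three points. Hence $|N_G(H):H|=3$. Now $G$ acts on the set of blocks as a Frobenius group with complement $N_G(H)$, so Frobenius' theorem gives a kernel $K$ acting regularly on the blocks. Let $F:=K H$-part, i.e.\ the kernel $F$ of the action of $G$ on the three orbits produced by $N_G(H)/H$. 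I would then show that $G/K$ has a quotient of order $3$ coming from $N_G(H)/H$. There are two subcases. If $K$ is regular on $\Omega$ (so $|K|=|\Omega|$), this is (1). Otherwise $K\cap H\neq1$, and $F:=KH$ of index $3$ has three orbits on each of which $H$ is a Frobenius complement, which gives (2). This is essentially the original argument of Proposition 6.3 in \cite{MW3}, and it can stay unchanged.

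\textbf{Case B: some $x\in H^\#$ fixes only $\omega$.} Then $C_G(x)\le H$ has odd order. The subgroups of $H$ fixing all of $\Delta$ form a normal subgroup $H_0$, which is nontrivial because $|\FO(H)|=3$. I would use Theorem \ref{min} on a minimal normal subgroup $M$ of $G$. Since $H$ has odd order, the rank~$>1$ subcases of (b) and (d) are excluded: in (b)(ii)/(iii) the listed $G_\alpha$ have even order, and in (d) $|G_\alpha|$ is even. So the possible outcomes are that all Sylow subgroups of $H$ are cyclic, that $|M|=3$ with $M$ semiregular, or (c)/(e).

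\begin{itemize}
\item In outcome (e), $M\cong\Alt_5$ or $\PSL_2(7)$ with $|H|=3$, and Remark \ref{ex4.1} gives $G=M\times Y$. Here $M\times Y$ contains a regular normal subgroup, namely a complement to a diagonal, or else we take $N:=Y$ semiregular with $G/N$ simple.
\item In outcome (c), induction on $|G|$ applied to $(M,\omega^M)$ shows that $M$ is one of the simple fixity-3 groups, and then $C_G(M)$ acts semiregularly. Taking $N:=C_G(M)$ gives $G/N$ almost simple, and $H\cong HN/N$ is cyclic by the classification of odd-order stabilisers in those examples.
\item When the Sylow subgroups of $H$ are cyclic (a Z-group of odd order) and $F^*(G)=F(G)$, I would take $N$ to be the largest normal subgroup acting semiregularly. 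If $N$ is transitive we are in (1). Otherwise we pass to $G/N$ and iterate, using Lemma \ref{new6.6} to handle $3$-groups $H$ with a regular normal subgroup. The Frobenius statement for $NH$ on $\omega^N$ comes from $C_N(x)\le N\cap H=1$ for elements $x$ fixing only $\omega$. The bound $|H|\ge4$ arises because $|H|=3$ with such a structure falls back into (1) or (2).
\end{itemize}

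The main obstacle is this last bullet: showing that $H$ is cyclic and that $G/N$ is genuinely almost simple, rather than merely having soluble layers. I would attack it via the recent results on soluble groups acting with low fixity that the introduction mentions, using them to show that whenever $G/N$ has a nontrivial soluble normal subgroup, a larger semiregular or regular normal subgroup already exists.
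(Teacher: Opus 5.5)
There is a genuine gap, and it starts with a misreading of the hypothesis. $\FO(G_\omega)$ is the set of points fixed by \emph{every} element of $H=G_\omega$. So $|\FO(H)|=3$ means each $x\in H^\#$ fixes $\omega,\omega_1,\omega_2$, and hence, by fixity 3, exactly these points.

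\begin{itemize}
\item Your Case B, where some $x\in H^\#$ fixes only $\omega$, is therefore empty, and your $H_0$ is just $H$.
\item The option $N_G(H)=H$ never occurs either. By transitivity $|G_{\omega_i}|=|H|$, so $H\le G_{\omega_i}$ forces $G_{\omega_i}=H$, and $N_G(H)$ is transitive on $\Delta$.
\end{itemize}

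Consequently every configuration, including all almost simple ones, lies in your Case A. There your argument fails. $G$ does not in general act on the triples $\FO(H^g)$ as a Frobenius group with complement $N_G(H)$, because $N_G(H)$ need not be a t.i.\ subgroup. Elements of $N_G(H)$, and even elements of order $3$ in $H$ itself, can stabilise other triples. Two examples show this:
\begin{itemize}
\item In $3^{1+2}$ acting on the cosets of a non-central subgroup $H$ of order $3$, $H$ lies in the kernel of the action on the three triples.
\item Decisively, $\Alt_8$ acting on the cosets of a Sylow $7$-subgroup $H$ (Lemma \ref{alt3}) satisfies all hypotheses. Here $|H|=7$, $N_G(H)\cong C_7:C_3$ gives $|\FO(H)|=3$, and the fixity is $3$. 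Each element of order $3$ in $N_G(H)$ lies in $12$ conjugates of $N_G(H)$, and a simple group has no Frobenius kernel. This example satisfies only conclusion (3), with $N=1$, which your Case A would exclude because it only ever produces (1) or (2).
\end{itemize}

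So the substance of the proof is missing. The paper argues by minimal counterexample on a minimal normal subgroup $N$.

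If $N$ is abelian, it acts semi-regularly because $|G_\omega|$ is odd (Lemma 4.2 of \cite{HMW}), and $G/N$ acts with fixity at most $3$ on the $N$-orbits. One then splits by the number of orbits and the induced fixity:
\begin{itemize}
\item fixity $1$ gives a Frobenius action and hence a regular normal subgroup;
\item fixity $2$ is impossible, because it produces a group of order $18$ acting faithfully with fixity $3$ on $6$ points;
\item fixity $3$ allows induction, and (1)--(3) lift back to $G$.
\end{itemize}

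If no minimal normal subgroup is abelian, then $F(G)=1$ and Theorem \ref{min} makes $N$ the unique component, so $G$ is almost simple. Theorems 1.1 and 1.2 of \cite{MW3} then force $G_\omega$ cyclic of order at least $4$; the case $\aut(\PSL_2(2^p))$ is excluded because its point stabilisers have even order.

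Some ingredients of your Case B bullets point in this direction, but they sit in a vacuous case. Moreover, the step you flag as the main obstacle, that $G/N$ is almost simple rather than having soluble layers, is exactly what the induction over abelian minimal normal subgroups handles, and it remains unresolved in your proposal.
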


\begin{proof}
The hypothesis implies that every point stabiliser is a three point stabiliser, which is a stronger hypothesis than asking for $G$ to be a $(0,3)$-group. It implies that $G_\omega$ is a t.i. subgroup, which is why we deviate from \cite{MW3} and omit Case (1) of the original proposition. 

Since this shows up several times in the proof,
we briefly discuss the case where $G$ is soluble.
Theorem 1.3 in \cite{HMW} gives two possibilities with point stabilisers of odd order. The first one is exactly our Case (2), and the second one is almost our case (3). If $G$ has a subgroup of index 6 that acts like a Frobenius group on its three orbits, then the orbit stabiliser theorem yields that point stabilisers must have even order. Therefore, this situation only occurs with a subgroup of index 3.
This concludes the special case where $G$ is soluble. 

From now on, we suppose that $G$ is a minimal counterexample to the proposition and we let $N$ denote a minimal normal subgroup.

\smallskip
First we assume for a contradiction that $N$ is abelian.

\smallskip
Let $p$ be a prime such that $N$ is an elementary abelian $p$-group. 
Then we can work under Hypothesis 4.1 in \cite{HMW} with all its notation, and Lemma 4.2 in \cite{HMW} gives that
$N$ acts semi-regularly on $\Omega$, because the point stabilisers have odd order.
Lemma 4.3 tells us that $\bar G$ acts with fixity at most 3 on $\bar \Omega$, and then we can look at the possibilities depending on the size of $\bar \Omega$.
If $|\bar \Omega| \le 2$, then Lemma 4.7 of \cite{HMW} applies, and (ii) yields that $G$ has a regular normal subgroup, as in (2).
If $|\bar \Omega|=3$, then Lemma 4.8 of \cite{HMW} gives that one of the cases (2) or (3) of our proposition holds, bearing in mind that a Frobenius subgroup of index 6 is not possible.
Now suppose that  $|\bar \Omega|\ge 4$. We recall that $\bar G$ acts transitively and with fixity at most 3 on $\bar \Omega$, in particular the action is faithful, and we can now look at each possible fixity.
Since $G_\omega$ stabilises $\omega^N=\bar \omega$, we see that the smallest case is fixity 1. In this case $\bar G$ acts like a Frobenius group and the full pre-image of the Frobenius kernel is a regular normal subgroup of $G$, as in (2).

Fixity 2 comes with strong restrictions: Assume for a contradiction that $\bar G$ acts with fixity 2 on $\bar \Omega$ and let $\alpha, \beta, \omega$ denote the three fixed points of $G_\omega$. Since $G_\omega$ stabilises every $N$-orbit that contains one of the elements $\alpha, \beta$ or $\omega$, these three elements must be distributed onto the exactly two $N$-orbits that $G_\omega$ stabilises. It is not possible that one of the orbits contains one fixed point and the other one contains two, which leads to the following situation (with loss): $\alpha, \beta \in \omega^N$ and $G_\omega$ also stabilises another $N$-orbit, but without fixing any points. This forces $|G_\omega|=3$ and $p=3$, and if we look at Lemma \ref{solumin}~(b), then the only possibility is $|N|=3$. 

Now in $G$, the fact that $G_\omega$ has order 3 and fixes three points forces $|N_G(G_\omega)|=9$, whereas in $\bar G$, the fact that $\bar G_\omega$ fixes exactly two points on $\bar \Omega$ forces $|N_{\bar G}(\bar G_\omega)|=6$, with full pre-image $L$ in $G$ of order 18.
Since $L$ must stabilise the union of $G_\omega$-invariant $N$-orbits, we see that it acts faithfully and with fixity 3 on a set of size 6.
Then Theorem 1.3 in \cite{HMW} gives a contradiction.\\

If the fixity is 3, then we 
check if we can apply our proposition to $\bar G$. 
We see that $\overline{G_\omega}$ has odd order and we 
let $x\in G_\omega N$ be such that $\bar x$ fixes three points on $\bar \Omega$. In particular $x \notin N$ and without loss $x \in G_\omega$. There are two cases, in parallel to the fixity 2 situation in the previous case:  

Case 1: The three fixed points of $x$ lie in $\omega^N$ and $x$ stabilises two more $N$-orbits, but fixed-point-freely. This implies that $|G_\omega|=3$ and $|N|=3$ as before, and each element of $G_\omega$ stabilises the same three $N$-orbits.

Case 2: The three fixed points of $x$ lie in three distinct $N$-orbits. This implies that each element of $G_\omega$ stabilises the same three $N$-orbits.

We conclude that $\bar G$ satisfies the hypothesis of our proposition in the action on $\bar \Omega$, and since $G$ is a minimal counterexample, we may apply (1)--(3).
In Case (1) we take the full pre-image in $G$ of a regular normal subgroup of $\bar G$, which gives (1).

In Case (2) we have a subgroup $\bar F$ of $\bar G$ with three orbits on  $\bar \Omega$ that acts as a Frobenius group on each of them. As we have seen before, the fact that the point stabilisers have odd order forces $\bar G$, and then $G$, to be soluble. 

In Case (3) we find a semi-regular normal subgroup $\bar M$ of $\bar G$ such that $\bar G/\bar M$ is almost simple and point stabilisers are cyclic of order at least $4$. The full pre-image $M$ of $\bar M$ in $G$ is a semi-regular normal subgroup then, $G/M$ is almost simple and
the point stabilisers in $G$ are cyclic of order at $4$ as well because they intersect $M$ trivially. If there exists a non-trivial $x \in G_\omega$ with more than one fixed point in $\omega^M$, then we get the contradiction $|G_\omega| = 3$. Hence $M G_\omega$ acts as a Frobenius group on $\omega^M$.

Since $G$ is a counterexample, none of this happens and $N$ cannot be abelian. In particular $F(G)=1$.

\smallskip
Then $N$ is a product of simple components of $G$, and Theorem \ref{min} tells us that $N$ is a unique component and therefore $N=F^*(G)$.
In particular $G$ is almost simple. If $G$ is simple, then Theorem 1.1 in \cite{MW3} together with the hypothesis that $|G_\omega|$ is odd implies that $G_\omega$ is cyclic of order coprime to $6$. In particular, $|G_\omega| > 3$. If $G$ is almost simple but not simple, then Theorem 1.2 in \cite{MW3} is applicable. Cases (ii) and (iii) imply that the point stabilisers are cyclic of order at least $4$. Note that $\PGU_3(2)$ is not almost simple. In Case (i), $G \cong \aut(\PSL_2(2^p))$ for some prime $p$ with point stabilisers of order $2^p \cdot (2^p - 1) \cdot p$. 
In particular, the point stabilisers have even order in contradiction to our hypothesis.

\end{proof}

\vspace{0.5cm}
\textbf{Proof of Theorem \ref{main3}:}

For the assertion that $3 \in \pi(G)$, the $2$-structure of $G$ plays a role, and all arguments are correct. We have already revisited the places where Lemma 2.23 and Theorem 2.24 of \cite{MW3} are used in the previous section. 
Then, following the strategy in \cite{MW3}, we consider the case where point stabilisers have even order, replacing Proposition 6.2 of \cite{MW3} by our Proposition \ref{6.2} above. The possibilities there give exactly the cases (1)~(a) -- (d) of Theorem \ref{main3}.

If the point stabilisers have odd order, then we may suppose that $|\Omega| \ge 7$, checking smaller cases with \texttt{GAP}, and then Lemma 2.5 of \cite{MW3} is applicable. Let $\omega \in \Omega$.
If $G_\omega$ is a Frobenius group and the Frobenius complements are three point stabilisers, then Corollary 2.6 from \cite{MW3} applies. Let $H$ denote a Frobenius complement in $G_\omega$. Then $(G, G/H)$ satisfies Hypothesis \ref{hypkfix} with the natural action of $G$ by right multiplication, and more specifically the point stabilisers are three point stabilisers. This already contributes to the hypothesis of Proposition \ref{new6.3} for this action -- we only need to see that the point stabilisers have odd order.
Hence let $g \in G$. Then $H^g$ is the stabiliser of the point $Hg$, and $H^g$ has odd order because $H \le G_\omega$ has odd order.
Consequently, Proposition \ref{new6.3} is applicable to the action of $G$ on $G/H$.
First we let $N$ denote a normal subgroup of $G$ that acts regularly on $G/H$, as in Case (1).
Then we turn to Lemma 6.4 from \cite{MW3}, which is about exactly this situation, and it tells us that $G$ is soluble and gives specific information about the structure of $G$. This is described in Lemma 2.10 of \cite{MW3}. In particular we have a regular normal subgroup in the action on $\Omega$,
as stated in Theorem \ref{main3}~(2)~(a).

If Proposition \ref{new6.3}~(2) holds, then we let $F$ denote a normal subgroup of index $3$ of $G$ that has three orbits on $G/H$ and acts on them like a Frobenius group.
First we note that $H \le F$ because otherwise $G$ is not transitive on $G/H$, and then $H$ is contained in a point stabiliser in $F$.=
Conversely, if $L \le F$ is a Frobenius complement, then it fixes an element of $G/H$ and is therefore contained in a conjugate of $H$. Therefore, the conjugates of $H$ are exactly the Frobenius complements in $F$. 
We also see that $G_\omega \le F$: Otherwise $G_\omega \cap F$ is a normal subgroup of index $3$ in $G_\omega$, and it contains $H$, which is a Frobenius complement in $F$. This is impossible. 
Let $K \le F$ denote the Frobenius kernel of $F$ in the action on $G/H$. Then $K_\omega \unlhd G_\omega$ is contained in the Frobenius kernel of $G_\omega$. Conversely, if $x \in G_\omega$ is contained in the Frobenius kernel in $G_\omega$, then it fixes only $\omega$ and no other point in $\Omega$. In particular it is not contained in any conjugate of $H$, but it is contained in $F$. Then it follows that $x\in K$, hence $x\in K_\omega$.

We finish this case by investigating the action of $K$ on $\Omega$. Since $K$ is characteristic in $F$ and hence normal in $G$, it cannot be contained in a point stabiliser. Therefore $K_\omega \lneq K$. 
If $|K:K_\omega| \le 3$, then $|\omega^K| \le 3$ and therefore $|\Omega|\le 9$. Now $K$ is isomorphic to a nilpotent subgroup of $\Sym_9$, $1 \neq K_\omega$ has odd order and $G_\omega$ has odd order, which forces $K$ to be a $3$-group. But then the Frobenius complements in $F$ must have even order, which is impossible.
If $|K:K_\omega| > 3$, then we recall that $K$ acts with fixity $2$ or $3$ on $\Omega$ and, since $K$ is nilpotent, it follows that $K$ has prime power order. Moreover, $|G_\omega|$ is odd and therefore $K$ is a $3$-group of maximal class. This implies that $|K:\Phi(K)| = 9$ \cite[III.14.2]{Hupp} and again it follows that the Frobenius complements have even order. This gives a contradiction.

Finally, if Proposition \ref{new6.3}~(3) holds, then $G$ has a semi-regular normal subgroup $N$ (in the action on $G/H$) such that $\overline{G} := G/N$ is almost simple and the point stabilisers are cyclic, which in our situation means that $H$ is cyclic. We note that, therefore, $G_\omega$ is a Frobenius group of odd order with cyclic complements.

First, we assume that $N_\omega = N \cap G_\omega \neq 1$. Then $1 \neq N_\omega \neq N$ because the action is faithful. Moreover, since $NH$ is a Frobenius group with kernel $N$, we also know that $N$ is nilpotent. As in the previous case, we see that this is only possible if $|H|$ is even.

If follows that $N_\omega = 1$ and therefore $\overline{G_\omega} \cong G_\omega$ is a Frobenius group. If $N \neq 1$, then $N H$ is a Frobenius group with complement $H$ by Proposition \ref{new6.3} and $G_\omega$ is also a Frobenius group with complement $H$ by our choice of $H$. If follows that $N G_\omega$ is a Frobenius group with complement $H$ and Lemma \ref{normaliser} (i) yields $|N| \leq 3$. This forces $H$ to have even order in contradiction to our hypothesis.

Hence $N = 1$ and $G = \overline{G}$ is almost simple. As in the proof of Proposition \ref{new6.3}, we go through the different possibilities in Theorem 1.1 and 1.2 of \cite{MW3} and see that $G_\omega$ must be cyclic.


\end{document}